\definecolor{egreen}{rgb}{0,0.6,0}
\numberwithin{equation}{section}
\newtheorem{theorem}{Theorem}[section]
\newtheorem{lemma}[theorem]{Lemma}
\newtheorem{proposition}[theorem]{Proposition}
\newtheorem{ass}[theorem]{Assumption}
\newcommand{\abs}[1]{\lvert#1\rvert} 		   
\newcommand\norm[1]{\left\lVert#1\right\rVert} 
\newcommand{\fdg}{{\,\big|\,}}
\begin{document}


\begin{frontmatter}

\title{Note on the existence theory for
  evolution equations with pseudo-monotone operators}

\author[eb]{E.~B{\"a}umle}
\ead{Erik.Baeumle@gmx.de}

\author[mr]{M.~R\r u\v zi\v cka \corref{cor1}}
\ead{rose@mathematik.uni-freiburg.de}

\cortext[cor1]{Corresponding author}

\address[eb]{Bl{\"u}tenweg 12, D-77656 Offenburg, GERMANY} 

\address[mr]{Institute of Applied Mathematics,
  Albert-Ludwigs-University Freiburg, Eckerstr.~1, D-79104 Freiburg,
  GERMANY.}

\begin{abstract}
In this note we present a framework which allows to prove an abstract
existence result for  evolution equations with pseudo-monotone
operators. The assumptions on the spaces and the operators can be
easily verified in concrete examples. 
\end{abstract}

\begin{keyword}
Evolution equation, pseudo-monotone operator, existence result.
\end{keyword}


\end{frontmatter}


\section{Introduction}
The theory of pseudo-monotone operators is very useful in proving the
existence of solutions of non-linear problems. The main theorem on
pseudo-monotone operators, due to Brezis \cite{bre68}, shows the
surjectivity of a pseudo-monotone, bounded, coercive operator.  This
result extends the fundamental contribution of Browder~\cite {bro63}
and Minty \cite{min63} on monotone operators to pseudo-monotone
operators. The prototype of such an operator is a sum of a monotone
operator and a compact operator. A huge class of elliptic partial
differential equations can be treated in this framework, since many "{}lower
order terms''{} define a compact operator due to compact embedding
theorems. 

The theory of monotone operators can easily be generalized to the
treatment of non-linear evolution equations (cf.~\cite{lions-quel},
\cite{ZeidB}, \cite{show97}). In the fundamental contribution
\cite{lions-quel} Lions combines, among others, monotonicity methods
with compactness methods.  Even though, there exists a general
existence result for evolutionary pseudo-monotone, coercive, bounded
operators (cf.~\cite{show97}, \cite{rose-book}, \cite{Roub}), its
applicability to concrete problems is limited. This is due to the fact
that the treatment of "{}lower order terms''{} as a compact operator
needs usually additional information on the time derivative.  The
incorporation of the time derivative into the function space however
contradicts the required coercivity of the operator.  The way out of
this problem for evolution problems is to repeat and adapt the
arguments given in \cite{lions-quel} to the concrete application to be
treated. This is a non-satisfactory situation. There are many
contributions to develop a general existence theory for evolution
equations with pseudo-monotone operators (cf.~\cite{Hir1},
\cite{Hir2}, \cite{Shi}, \cite{show97}, \cite{Roub},
\cite{liu11}). 

The purpose of this note is to provide an existence theory for
evolution equations with pseudo-monotone operators which is easily
applicable. To this end we use and extend ideas from \cite{Shi} and
\cite{Roub}. Essentially, one has to check whether the operator is on
almost all time slices pseudo-monotone, coercive and satisfies certain
natural growth conditions. This enables us to prove a generalization
of Hirano's lemma, which in turn allows the limiting process in the
Galerkin approximation of the evolution problem. To verify the
assumptions of Hirano's lemma we need a technical assumption on the
function spaces, which can be traced back to
\cite{lions-quel}. Note that such an assumption is not present in
\cite{Shi}. However, we are not able to follow one argument
in the proof of Lemma 3 there, which would circumvent this
technical assumption. In concrete application the technical assumption
is easily verified.

The paper is organized as follows: In Section 2 we introduce the
notation and collect some basic results for pseudo-monotone operators
and evolution problems. In Section 3 we give the assumption on the
function spaces and the operator and formulate the main
theorem. Section 4 is devoted to the proof of the generalization of
Hirano's lemma. The main theorem is proved in Section 5. Finally we
apply the main result in Section 6 to treat the evolution $p$-Laplace
equation with a lower order term and the evolution equation for
generalized Newtonian fluids.

\section{Preliminaries}
\subsection{Notation and conventions}
For a Banach space $X$ with norm $\Vert \cdot \Vert _X$ we denote by
$X^\ast$ its dual space equipped with the norm
$\Vert \cdot \Vert_{X^\ast}$. The duality pairing is denoted by
$\langle \cdot,\cdot\rangle_X$. All occurring Banach spaces are
assumed to be real. By an embedding we always understand a continuous
embedding. Time integrals will usually be written as $\int_0^T f(t)$
instead of $\int_0^T f(t)\, dt$. By $c$ we denote a generic constant
which may change from line to line. Finally we use the standard
notation for Bochner spaces (cf.~\cite{Evans}).
\subsection{Auxiliary results}
From now on $V$ always denotes a separable, reflexive Banach space and
$H$ a Hilbert space. If the embedding $V\hookrightarrow H$ is dense,
we call $(V,H,V^\ast)$ a Gelfand-Triple. Using the Riesz
representation theorem we obtain
$V\hookrightarrow H \cong H^\ast \hookrightarrow V^\ast$ where both
embeddings are dense.  An operator $A\colon V\to V^*$ is said to be
monotone if $\langle Ax-Ay,x-y\rangle_V \geq 0$ for all $x,y \in V$.
The operator ${A \colon V\to V^*}$ is said to be pseudo-monotone if
$x_n \rightharpoonup x$ in V and
$\limsup_{n\rightarrow \infty} \langle Ax_n,x_n-x \rangle_V\leq 0$
implies
\begin{align*}
  \langle Ax,x-y \rangle_V \leq \liminf\limits_{n\rightarrow \infty} \,
  \langle Ax_n, x_n -y \rangle_V \qquad \text{for all } y \text{ in }
  V. 
\end{align*}  
The following proposition introduces some typical types of pseudo-monotone operators.
\begin{proposition}\label{pseudo}
Let $A,B:V\rightarrow V^\ast$ be operators.  Then there holds:
\begin{itemize}
\item[(a)] If $A$ is monotone and hemicontinuous, then $A$ is pseudo-monotone.
\item[(b)] If $A$ is strongly continuous, then $A$ is pseudo-monotone.
\item[(c)] If $A$ and $B$ are pseudo-monotone, then $A+B$ is pseudo-monotone.
\end{itemize}
\end{proposition}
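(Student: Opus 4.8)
The plan is to treat the three parts separately, with parts (b) and (c) reducing essentially to book-keeping once (a) is understood. For (a), assume $x_n\rightharpoonup x$ in $V$ with $\limsup_{n\to\infty}\langle Ax_n,x_n-x\rangle_V\le 0$. First I would use monotonicity to note that $\langle Ax_n,x_n-x\rangle_V\ge\langle Ax,x_n-x\rangle_V\to 0$, so in fact $\langle Ax_n,x_n-x\rangle_V\to 0$. Then I fix $y\in V$ and run Minty's trick: for $t\in(0,1)$ put $v_t:=(1-t)x+ty$, apply monotonicity of $A$ to the pair $(x_n,v_t)$, write $x_n-v_t=(x_n-x)+t(x-y)$, and rearrange so as to isolate $\langle Ax_n,x_n-y\rangle_V$. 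Passing to the limit $n\to\infty$, using $\langle Av_t,x_n-x\rangle_V\to 0$ (weak convergence tested against the fixed functional $Av_t$) together with $\langle Ax_n,x_n-x\rangle_V\to 0$, one arrives at $\liminf_{n\to\infty}\langle Ax_n,x_n-y\rangle_V\ge\langle Av_t,x-y\rangle_V$ for every $t\in(0,1)$. Finally I let $t\to 0^+$ and invoke hemicontinuity of $A$, which yields $\langle Av_t,x-y\rangle_V\to\langle Ax,x-y\rangle_V$ and hence the claim. The one genuinely careful point is keeping track of signs so that the $\liminf$ ends up on the correct side of the inequality; note that no boundedness of $A$ is required, since $\langle Ax_n,x-y\rangle_V$ never appears in isolation.

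Part (b) is immediate: if $x_n\rightharpoonup x$ then $Ax_n\to Ax$ strongly in $V^\ast$ by strong continuity, and $(x_n)$ is bounded, so from $\langle Ax_n,x_n-y\rangle_V=\langle Ax_n-Ax,x_n-y\rangle_V+\langle Ax,x_n-y\rangle_V$ the first term tends to $0$ and the second to $\langle Ax,x-y\rangle_V$; thus $\langle Ax_n,x_n-y\rangle_V\to\langle Ax,x-y\rangle_V$ for every $y\in V$, which is stronger than what is needed and does not even use the $\limsup$-hypothesis.

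For (c), assume $x_n\rightharpoonup x$ and $\limsup_{n\to\infty}\langle(A+B)x_n,x_n-x\rangle_V\le 0$. The key step, and the main obstacle of the whole proposition, is to show that $\limsup_{n\to\infty}\langle Ax_n,x_n-x\rangle_V\le 0$ and $\limsup_{n\to\infty}\langle Bx_n,x_n-x\rangle_V\le 0$ separately. I would argue by contradiction: if $\langle Ax_{n_k},x_{n_k}-x\rangle_V\ge\delta>0$ along some subsequence, then, since $\limsup_{k}\langle(A+B)x_{n_k},x_{n_k}-x\rangle_V\le 0$, we get $\limsup_{k}\langle Bx_{n_k},x_{n_k}-x\rangle_V\le-\delta<0$; but pseudo-monotonicity of $B$ applied to the subsequence $(x_{n_k})$ (with the choice $y=x$) forces $0=\langle Bx,x-x\rangle_V\le\liminf_{k}\langle Bx_{n_k},x_{n_k}-x\rangle_V$, a contradiction, and the roles of $A$ and $B$ are symmetric. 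With both $\limsup$'s now under control, pseudo-monotonicity of $A$ and of $B$ both apply; adding the two resulting inequalities and using the superadditivity of the $\liminf$, one obtains $\langle(A+B)x,x-y\rangle_V\le\liminf_{n\to\infty}\langle(A+B)x_n,x_n-y\rangle_V$ for all $y\in V$, as desired. Everything outside the $\limsup$-decomposition and the Minty computation in (a) is purely formal manipulation.
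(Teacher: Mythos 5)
Your proposal is correct: part (a) via Minty's trick (with the sign bookkeeping done correctly, so that indeed no boundedness of $A$ is needed), part (b) as a direct computation, and part (c) via the contradiction argument splitting the $\limsup$ between $A$ and $B$ before adding the two pseudo-monotonicity inequalities (the superadditivity of $\liminf$ is legitimate here since both $\liminf$'s are bounded below by the finite quantities $\langle Ax,x-y\rangle_V$ and $\langle Bx,x-y\rangle_V$). The paper gives no proof of its own but simply cites Zeidler's Proposition 27.6, and your argument is exactly the standard proof found there, so the approaches coincide.
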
   
\begin{proof}
See \cite[Proposition 27.6]{ZeidB}
\end{proof}
Next we will state some well known results concerning Bochner and
Bochner--Sobolev spaces, which will be used in the following. 
\begin{theorem}[Pettis]\label{pettis}
A function $u \colon (0,T) \to V$  is Bochner measurable if and only
if $u$ is weakly measurable in the sense: $\langle v^\ast ,u(\cdot)
\rangle_V$ is Lebesgue measurable for any $v^\ast \in V^\ast$.   
\end{theorem}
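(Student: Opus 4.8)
The plan is to establish the two implications separately, the forward one being routine and the converse relying on the separability of $V$. If $u$ is Bochner measurable, I would pick simple functions $s_n\colon(0,T)\to V$ with $s_n(t)\to u(t)$ for a.e.\ $t$; then for each fixed $v^\ast\in V^\ast$ the scalar functions $t\mapsto\langle v^\ast,s_n(t)\rangle_V$ are simple, hence Lebesgue measurable, and they converge a.e.\ to $t\mapsto\langle v^\ast,u(t)\rangle_V$, which is therefore measurable. So $u$ is weakly measurable.

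For the converse I would exploit that $V$ is separable. Fix a countable dense set $\{x_k\}_{k\in\mathbb{N}}\subset V$ and, via the Hahn--Banach theorem, norming functionals $v_k^\ast\in V^\ast$ with $\|v_k^\ast\|_{V^\ast}=1$ and $\langle v_k^\ast,x_k\rangle_V=\|x_k\|_V$. A short density estimate then shows that $\|y\|_V=\sup_{k\in\mathbb{N}}\langle v_k^\ast,y\rangle_V$ for every $y\in V$: the inequality ``$\geq$'' is clear from $\|v_k^\ast\|_{V^\ast}=1$, and for the reverse one approximates $y$ by some $x_k$ within $\eps$ and uses $\langle v_k^\ast,y\rangle_V\geq\|x_k\|_V-\eps\geq\|y\|_V-2\eps$. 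Consequently, for every fixed $x\in V$ the map
\begin{align*}
  t\longmapsto\|u(t)-x\|_V=\sup_{k\in\mathbb{N}}\bigl(\langle v_k^\ast,u(t)\rangle_V-\langle v_k^\ast,x\rangle_V\bigr)
\end{align*}
is a countable supremum of Lebesgue measurable functions (each summand is measurable by the weak measurability hypothesis), hence measurable. This is the one genuinely substantial step, and essentially the only place where separability enters.

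Given this, I would construct approximants explicitly: for $m\in\mathbb{N}$ and $t\in(0,T)$ let $k(m,t)$ be the smallest index $k$ with $\|u(t)-x_k\|_V<1/m$ (which exists by density) and put $u_m(t):=x_{k(m,t)}$. Each level set $\{t:k(m,t)=k\}$ is obtained from the measurable sets $\{t:\|u(t)-x_j\|_V\geq 1/m\}$ and $\{t:\|u(t)-x_k\|_V<1/m\}$ by finite intersections and differences, so $u_m$ is a countably-valued measurable function; it is therefore Bochner measurable, being the everywhere-limit of the simple functions obtained by keeping only its first $N$ values and letting $N\to\infty$. Since $\|u_m(t)-u(t)\|_V<1/m$ for every $t$, we have $u_m\to u$ pointwise on $(0,T)$, so $u$ is a pointwise limit of Bochner measurable functions and hence Bochner measurable. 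The main obstacle is precisely the measurability of $t\mapsto\|u(t)-x\|_V$; once the countable norming family is produced, the remainder is bookkeeping with level sets.
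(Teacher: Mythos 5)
Your argument is correct, but note that the paper does not prove this statement at all: it simply cites Roub\'{\i}\v{c}ek [Theorem 1.34], so your proposal supplies a complete proof where the paper only gives a reference. What you write is the standard textbook proof of Pettis' theorem in the separable setting: the forward direction via simple approximants, and the converse via a countable norming family $(v_k^\ast)$ built from a dense sequence $(x_k)$, giving measurability of $t\mapsto\|u(t)-x\|_V$ as a countable supremum, and then the countably-valued approximants $u_m$ with $\|u_m(t)-u(t)\|_V<1/m$. Two small remarks. First, your use of separability of $V$ is legitimate here because the paper assumes throughout that $V$ is separable; in the general Pettis theorem this hypothesis is replaced by ``$u$ essentially separably valued,'' and your proof is exactly the specialization to the separable case. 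Second, your final step invokes the closure of Bochner measurable functions under pointwise limits; this is a true but not completely free fact, and if you want to avoid it you can finish more directly: choose $N_m$ so that the set $\{t: k(m,t)>N_m\}$ has measure less than $2^{-m}$ (possible since $(0,T)$ has finite measure), define the simple function $s_m$ to agree with $u_m$ where $k(m,t)\leq N_m$ and to be $0$ elsewhere, and conclude by Borel--Cantelli that $s_m\to u$ almost everywhere, which gives Bochner measurability of $u$ without appealing to the closure property.
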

\begin{proof}
See \cite[Theorem 1.34]{Roub}.
\end{proof}
Let $W$ be a Banach space such that the embedding $V\hookrightarrow W$
is dense and let $1<p,q<\infty$.  We say that a
function $u \in L^p(0,T;V)$ has a generalized derivative in
$L^q(0,T;W)$ if there exists a function $g \in L^q(0,T;W)$ such that
\begin{align*}
  \int_0^T \varphi'(t) u(t) = - \int_0^T \varphi(t)g(t),\quad
  \textnormal{for all } \varphi \in C_0^\infty((0,T)). 
\end{align*}
If such a function $g$ exists, it is unique and we set
$\frac{du}{dt}: =g$.  With this definition of generalized derivatives
we are able to introduce Bochner--Sobolev spaces. For $1<p,q<\infty$
we define
\begin{align*}
  W^{1,p,q}(0,T;V,W):= \left\lbrace u\in L^p(0,T;V) \mid \frac{du}{dt} \in L^q(0,T;W) \right\rbrace.
\end{align*}
With the norm 
\begin{align*}
\norm{u}_{W^{1,p,q}(0,T;V,W)}:= \norm{u}_{L^p(0,T;V)} + \norm{\frac{du}{dt}}_{L^q(0,T;W) }
\end{align*}
this space is a reflexive Banach space.
\begin{proposition}\label{Einbettung1}
  Any element $u \in {W^{1,p,q}(0,T;V,W)}$ (defined almost everywhere)
  possesses a continuous representation on $[0,T]$ with values in $W$,
  and $W^{1,p,q}(0,T;V,W)$ embeds into $C(0,T;W)$.
\end{proposition}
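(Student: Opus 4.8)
The plan is to reduce the statement to the one–dimensional vector-valued Sobolev embedding and then to establish the fundamental theorem of calculus in the Bochner setting by mollification in time. First I would observe that, since the embedding $V\hookrightarrow W$ is continuous and $(0,T)$ has finite measure, one has $L^p(0,T;V)\hookrightarrow L^1(0,T;W)$ and $L^q(0,T;W)\hookrightarrow L^1(0,T;W)$; hence every $u\in W^{1,p,q}(0,T;V,W)$ belongs to $W^{1,1,1}(0,T;W,W)$ with
\[
 \|u\|_{L^1(0,T;W)}+\Bigl\|\tfrac{du}{dt}\Bigr\|_{L^1(0,T;W)}\le c\,\|u\|_{W^{1,p,q}(0,T;V,W)},
\]
the constant depending only on $T$, $p$, $q$ and the embedding constant of $V\hookrightarrow W$. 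So it suffices to prove the claim for $W^{1,1,1}(0,T;W,W)$, i.e.\ that such a $u$ has a continuous (in fact absolutely continuous) representative on $[0,T]$ with values in $W$ satisfying $\sup_{[0,T]}\|u(\cdot)\|_W\le c\bigl(\|u\|_{L^1(0,T;W)}+\|\tfrac{du}{dt}\|_{L^1(0,T;W)}\bigr)$; composing with the above estimate then gives the embedding into $C(0,T;W)$.

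Write $g:=\tfrac{du}{dt}\in L^1(0,T;W)$. The key step is to show that there is a null set $N\subset(0,T)$ with
\[
 u(t)-u(s)=\int_s^t g(\tau)\,d\tau\quad\text{in }W,\qquad\text{for all }s,t\in(0,T)\setminus N.
\]
To prove this I would fix $0<\delta<T/2$, extend $u$ and $g$ to a neighbourhood of $[\delta,T-\delta]$ by reflection, and mollify in time: for a standard mollifier $\omega_\eps$ set $u_\eps:=u*\omega_\eps$ and $g_\eps:=g*\omega_\eps$, both smooth on $(\delta,T-\delta)$ for small $\eps$. The defining identity of the generalized derivative together with $\varphi'*\omega_\eps=(\varphi*\omega_\eps)'$ yields $u_\eps'=g_\eps$ in the classical sense on $(\delta,T-\delta)$, so $u_\eps(t)-u_\eps(s)=\int_s^t g_\eps(\tau)\,d\tau$ there. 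Since $u_\eps\to u$ and $g_\eps\to g$ in $L^1(\delta,T-\delta;W)$, along a subsequence $u_\eps\to u$ pointwise in $W$ at a.e.\ point, while $\int_s^t g_\eps\to\int_s^t g$ uniformly in $s,t$; letting $\eps\to0$ and then $\delta\to0$ gives the displayed identity with $N$ independent of $s,t$.

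Given the identity, I would fix any $s_0\in(0,T)\setminus N$ and define $\tilde u(t):=u(s_0)+\int_{s_0}^t g(\tau)\,d\tau$ for $t\in[0,T]$. Since $g\in L^1(0,T;W)$, the map $\tilde u$ is absolutely continuous on $[0,T]$ with values in $W$, and $\tilde u=u$ almost everywhere by the identity, so $\tilde u$ is the desired continuous representative. For this representative, $\tilde u(t)-\tilde u(s)=\int_s^t g$ for \emph{all} $s,t\in[0,T]$, hence $\|\tilde u(t)\|_W\le\|\tilde u(s)\|_W+\int_0^T\|g(\tau)\|_W\,d\tau$; averaging over $s\in(0,T)$ and using $\int_0^T\|\tilde u(s)\|_W\,ds=\|u\|_{L^1(0,T;W)}$ gives $\sup_{t\in[0,T]}\|\tilde u(t)\|_W\le\tfrac1T\|u\|_{L^1(0,T;W)}+\|g\|_{L^1(0,T;W)}$, which combined with the first paragraph bounds $\|u\|_{C(0,T;W)}$ by $c\,\|u\|_{W^{1,p,q}(0,T;V,W)}$.

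I expect the main obstacle to be the rigorous justification of the mollification step in the vector-valued setting: commuting the convolution with the distributional derivative, handling the behaviour near the endpoints $0$ and $T$ (which is why one passes to $(\delta,T-\delta)$ together with a reflection extension), and arranging that the exceptional null set $N$ can be chosen once and for all, independently of $s$ and $t$. Everything else is a routine consequence of $g\in L^1(0,T;W)$ and of the continuity of $V\hookrightarrow W$ on the finite interval $(0,T)$.
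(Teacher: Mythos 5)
Your argument is correct. The paper does not prove this proposition itself but simply cites \cite[Proposition II.5.11]{BF}, and your reduction to $W^{1,1,1}(0,T;W,W)$ followed by mollification in time, the fundamental theorem of calculus for the Bochner integral, and the averaging trick to get the $C(0,T;W)$ bound is essentially the standard argument behind that citation. One small simplification: the reflection extension is unnecessary, since for $\eps<\delta$ the convolution at points of $(\delta,T-\delta)$ only uses values of $u$ and $g$ inside $(0,T)$, and the identity $u_\eps'=g_\eps$ there follows directly by testing the definition of the generalized derivative with $s\mapsto\omega_\eps(t-s)\in C_0^\infty((0,T))$; apart from that, only the routine remark is needed that the defining identity of $\frac{du}{dt}$, read in $W$ via $V\hookrightarrow W$, shows that $u$ indeed lies in $W^{1,1,1}(0,T;W,W)$ with the same derivative.
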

\begin{proof}
See \cite[Proposition II.5.11]{BF}.
\end{proof}
Let $(V,H,V^\ast)$ be a Gelfand-Triple and $1<p<\infty$. Then we define the  Bochner--Sobolev space
\begin{align*}
W_p^1(0,T;V,H):= \left\lbrace u\in L^p(0,T;V) \fdg \frac{du}{dt} \in L^{p'}(0,T;V^\ast) \right\rbrace,
\end{align*}
where $\frac{1}{p}+\frac{1}{p'}=1$. With the norm 
\begin{align*}
\norm{u}_{W_p^1(0,T;V,H)}:= \norm{u}_{L^p(0,T;V)} + \norm{\frac{du}{dt}}_{L^{p'}(0,T;V^\ast) }
\end{align*}
this space is a reflexive Banach space. Since the embedding 
$V\hookrightarrow V^\ast$ is dense, Proposition \ref{Einbettung1} is
valid. In this setting we can sharpen Proposition \ref{Einbettung1} as
follows: 
\begin{proposition}\label{Einbettung2}
  Let $(V,H,V^\ast)$ be a Gelfand-Triple. Then $W_p^1(0,T;V,H)$ embeds
  into $C(0,T;H)$. Moreover, the integration by parts formula
\begin{align*}
  \left( u(t) ,v(t)\right)_H- \left( u(s) ,v(s)\right)_H = \int_s^t
  \left\langle \frac{du}{dt}(\tau),v(\tau)\right\rangle_V
  +\left\langle \frac{dv}{dt}(\tau),u(\tau)\right\rangle_V 
\end{align*}
holds for any $u,v \in W_p^1(0,T;V,H)$ and arbitrary $0\leq s,t\leq T$.
\end{proposition}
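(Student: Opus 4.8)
The plan is to reduce everything to the density of smooth $V$-valued functions and then pass to the limit. First I would recall that by Proposition~\ref{Einbettung1} every $u\in W_p^1(0,T;V,H)$ has a representative in $C(0,T;V^\ast)$; the real content is to upgrade this to $C(0,T;H)$ and to justify the integration by parts formula. The standard route is a mollification-in-time argument: one extends $u$ suitably beyond $[0,T]$, mollifies to obtain $u_\eps\in C^\infty([0,T];V)$ with $u_\eps\to u$ in $L^p(0,T;V)$ and $\tfrac{du_\eps}{dt}\to\tfrac{du}{dt}$ in $L^{p'}(0,T;V^\ast)$, and likewise $v_\eps\to v$. For such smooth functions the scalar function $t\mapsto (u_\eps(t),v_\eps(t))_H$ is classically $C^1$ with derivative $\langle \tfrac{du_\eps}{dt},v_\eps\rangle_V+\langle\tfrac{dv_\eps}{dt},u_\eps\rangle_V$ (here one uses $V\hookrightarrow H\cong H^\ast\hookrightarrow V^\ast$ so that the $H$-inner product coincides with the $V$-duality pairing whenever one argument lies in $V$), and the fundamental theorem of calculus gives the integration by parts identity for $u_\eps,v_\eps$ and all $0\le s,t\le T$.

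Next I would pass to the limit $\eps\to 0$ in that identity. The right-hand side converges because $\tfrac{du_\eps}{dt}\to\tfrac{du}{dt}$ in $L^{p'}(0,T;V^\ast)$ and $v_\eps\to v$ in $L^p(0,T;V)$ (and symmetrically), so each of the two integrals converges; this also shows the integrand is in $L^1(0,T)$, hence the right-hand side is an absolutely continuous function of $t$. Taking $u=v$ and $s$ a point where convergence of the boundary term holds, one reads off that $t\mapsto \|u_\eps(t)\|_H^2$ is uniformly bounded and Cauchy in $C([0,T])$; together with $u_\eps\to u$ in $L^p(0,T;V)\hookrightarrow L^p(0,T;H)$ this identifies the limit, shows $u_\eps(t)\to u(t)$ in $H$ uniformly in $t$, and therefore that $u$ has a representative in $C(0,T;H)$ with $\|u_\eps(\cdot)-u(\cdot)\|_{C(0,T;H)}\to 0$. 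Continuity of the embedding $W_p^1(0,T;V,H)\hookrightarrow C(0,T;H)$ then follows from the uniform bound $\|u\|_{C(0,T;H)}\le c\,\|u\|_{W_p^1(0,T;V,H)}$, which one obtains by combining the integration by parts formula (with $v=u$) with Young's inequality after first integrating in $s$ over $[0,T]$ to control a single value $\|u(t_0)\|_H$.

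Finally, with the continuous $H$-valued representatives in hand, the boundary terms $(u_\eps(t),v_\eps(t))_H\to (u(t),v(t))_H$ and $(u_\eps(s),v_\eps(s))_H\to(u(s),v(s))_H$ for every $s,t\in[0,T]$, so the integration by parts formula passes to the limit for all $u,v\in W_p^1(0,T;V,H)$ and all $0\le s,t\le T$, as claimed. The main obstacle is the mollification step: one must extend $u$ past the endpoints in a way that preserves membership in $W_p^1$ (e.g.\ by reflection) so that $u_\eps$ and its derivative converge in the correct spaces up to $t=0$ and $t=T$; once that technical point is arranged, the rest is routine. Since this is a standard result, I would simply cite \cite{ZeidB} or \cite{show97} rather than reproduce the mollification in detail.
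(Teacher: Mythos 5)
Your proposal is correct and follows the standard mollification-and-limit argument, which is exactly the proof of the textbook result the paper itself invokes; the paper's ``proof'' is simply the citation \cite[Proposition 23.23]{ZeidA}, so your plan to sketch the argument and defer to a reference matches it (note only that the precise reference is Zeidler II/A, i.e.\ \cite{ZeidA}, rather than \cite{ZeidB}).
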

\begin{proof}
See \cite[Proposition 23.23]{ZeidA}.
\end{proof}
\begin{proposition}\label{ExistenzZeitab}
  Let $(V,H,V^\ast)$ be a Gelfand-Triple and let
  $1< p,q< \infty$. The function $u \in L^p(0,T;V)$ possesses a
  generalized derivative $\frac{du}{dt} \in L^q(0,T;V^\ast)$ if there
  is a function $w \in L^q(0,T;V^\ast)$ such that
\begin{align*}
  \int_0^T \left(u(t),v   \right)_H\varphi'(t) = -
  \int_0^T\left\langle w(t) ,v\right\rangle_V\varphi(t) 
\end{align*}
for all $v \in V$ and all $\varphi \in C_0^\infty((0,T)).$ Then $\frac{du}{dt} =w$.
\end{proposition}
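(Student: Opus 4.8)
The plan is to verify the defining identity for the generalized derivative directly. Since the embedding $V\hookrightarrow V^\ast$ is dense, the notion of generalized derivative and the space $W^{1,p,q}(0,T;V,V^\ast)$ are available with $W=V^\ast$; hence it suffices to show
\begin{align*}
  \int_0^T \varphi'(t)\,u(t) = -\int_0^T \varphi(t)\,w(t) \qquad\text{in } V^\ast
\end{align*}
for every $\varphi\in C_0^\infty((0,T))$, for then $u$ possesses a generalized derivative in $L^q(0,T;V^\ast)$ which, by the uniqueness recalled before Proposition~\ref{Einbettung1}, must equal $w$. Both integrals make sense: from $u\in L^p(0,T;V)\hookrightarrow L^1(0,T;V)\hookrightarrow L^1(0,T;V^\ast)$ and the boundedness of $\varphi'$ we get $\varphi' u\in L^1(0,T;V^\ast)$, and likewise $\varphi w\in L^1(0,T;V^\ast)$.

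To prove the displayed identity I would use that two elements of $V^\ast$ coincide as soon as they agree after being paired with every $v\in V$. So fix $\varphi$ and $v\in V$. Since $z^\ast\mapsto\langle z^\ast,v\rangle_V$ is a bounded linear functional on $V^\ast$, it commutes with Bochner integration, whence
\begin{align*}
  \Big\langle \int_0^T \varphi'(t)\,u(t),\,v\Big\rangle_V = \int_0^T \varphi'(t)\,\langle u(t),v\rangle_V , \qquad \Big\langle \int_0^T \varphi(t)\,w(t),\,v\Big\rangle_V = \int_0^T \varphi(t)\,\langle w(t),v\rangle_V .
\end{align*}
Because $u(t)\in V\subset H\subset V^\ast$ for almost every $t$, the defining property of the embedding $H\hookrightarrow V^\ast$ in the Gelfand-Triple gives $\langle u(t),v\rangle_V=(u(t),v)_H$ for a.e.\ $t\in(0,T)$. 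Inserting this and invoking the hypothesis yields
\begin{align*}
  \Big\langle \int_0^T \varphi'(t)\,u(t),\,v\Big\rangle_V = \int_0^T (u(t),v)_H\,\varphi'(t) = -\int_0^T \langle w(t),v\rangle_V\,\varphi(t) = -\Big\langle \int_0^T \varphi(t)\,w(t),\,v\Big\rangle_V .
\end{align*}
As $v\in V$ was arbitrary, the displayed identity in $V^\ast$ follows, and letting $\varphi$ vary we conclude that $w$ is the generalized derivative of $u$, i.e.\ $\frac{du}{dt}=w$.

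The argument is essentially bookkeeping around the Gelfand-Triple identifications; the only points that deserve care are the interchange of the duality pairing with the Bochner integral (which rests on the standard fact that bounded linear operators commute with Bochner integration) and the compatibility $\langle u(t),v\rangle_V=(u(t),v)_H$, valid precisely because on $H\times V$ the pairing $\langle\cdot,\cdot\rangle_V$ reduces to the inner product of $H$. The measurability of $t\mapsto(u(t),v)_H$ implicitly needed for the hypothesis to be meaningful follows from the Bochner measurability of $u$ together with Theorem~\ref{pettis}. I do not expect any genuine obstacle beyond these routine verifications.
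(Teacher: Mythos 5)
Your argument is correct. Note that the paper does not prove this proposition at all; it simply cites Zeidler [Proposition 23.20], so your write-up is a self-contained replacement for that reference rather than a variant of an in-paper argument. Your route is exactly the standard one behind the cited result: take $W=V^\ast$ (legitimate, since $V\hookrightarrow V^\ast$ is dense in a Gelfand triple, as the paper itself remarks before Proposition~\ref{Einbettung2}), observe that the Bochner integrals $\int_0^T\varphi'(t)u(t)$ and $\int_0^T\varphi(t)w(t)$ lie in $V^\ast$, and upgrade the scalar hypothesis to the vector-valued identity by pairing with arbitrary $v\in V$, using that the duality pairing commutes with the Bochner integral and that the Gelfand-triple identification gives $\langle u(t),v\rangle_V=(u(t),v)_H$ for $u(t)\in V\subset H$. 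Since two functionals in $V^\ast$ agreeing on all of $V$ coincide, the defining identity for the generalized derivative holds, and uniqueness (recalled before Proposition~\ref{Einbettung1}) gives $\frac{du}{dt}=w$. The integrability and measurability remarks you add are the right routine checks; there is no gap.
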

\begin{proof}
See \cite[Proposition 23.20]{ZeidA}.
\end{proof}

\section{Statement of the main theorem}
Now we formulate the assumptions on the spaces and the operator which
allow us to proof an abstract existence result for an evolution
equation with pseudo-monotone operators. 
\begin{ass}[Spaces]\label{vorraeume}
  Let $(V,H,V^\ast)$ be a Gelfand-Triple. Assume that there exists a
  reflexive, separable Banach space $Z$ such that the embedding
  ${Z\hookrightarrow V}$ is dense. Moreover, assume that there exists an
  increasing sequence of finite dimensional subspaces
  $V_n \subseteq Z$, such that $\cup _{n \in \mathbb{N}} V_n$ is dense
  in $V$.  Additionally, assume that there exists self-adjoint projections
  $P_n:H \rightarrow H$, such that $P_n(V)=V_n$ and
  $\norm{P_{n\ |Z}}_{\mathcal{L}(Z,Z)}\leq c$ with a constant $c$
  independent of $n\in\mathbb{N}$.
\end{ass}
\begin{ass}[Operators]\label{voroperatorfamilie}
Let $\lbrace A(t) \mid 0\leq t\leq T\rbrace$ be a family of operators from $V$ to $V^\ast$ with the following properties:
\begin{itemize}
\item[\textnormal{(A1)}] $A(t):V \rightarrow V^\ast$ is pseudo-monotone for almost every $t \in [0,T]$.
\item[\textnormal{(A2)}] For every
  $u \in L^p(0,T;V)\cap L^\infty(0,T;H)$ the mapping
  $t \mapsto A(t)u(t)$ from $[0,T]$ to $V^\ast$ is Bochner-measurable.
\item[\textnormal{(A3)}] There exists a positive constant $c_1$ and a
  non-negative function \\$C_2 \in L^1((0,T))$, such that
\begin{align*}
\langle A(t)x,x  \rangle_V \geq c_1 \norm{x}_V^{p} -C_2(t)
\end{align*}
for almost every $t \in [0,T]$ and all $x\in V$.
\item[\textnormal{(A4)}] There exists $0\le q<\infty$, as well as
  constants $c_3>0$, $c_4\geq 0$ and a non-negative function
  $C_5\in L^{p'}((0,T))$, such that
\begin{align*}
\norm{A(t)x}_{V^\ast}\leq c_3\norm{x}_V^{p-1}+ c_4 \norm{x}_H^q \norm{x}_V^{p-1} +  C_5(t)
\end{align*}
for almost every $t \in [0,T]$ and all $x\in V$.
\end{itemize}
\end{ass}
\begin{theorem}[Main Theorem]\label{maintheorem}
  If the Assumptions \ref{vorraeume} and \ref{voroperatorfamilie} are
  satisfied, then for every $u_0 \in H$, $f \in L^{p'}(0,T;V^\ast)$
  there exists a function $u \in W_p^1(0,T;V,H)$ such that
\begin{align*}
\frac{du}{dt}(t) +A(t)u(t)=f(t) &\qquad \text{in }V^\ast \text{ for a.e.} \ t \in [0,T]\\
u(0)=u_0 &\qquad  \text{in}\ H.
\end{align*}
\end{theorem}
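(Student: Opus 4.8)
The plan is to prove the Main Theorem by a Galerkin approximation in the spaces $V_n$ from Assumption~\ref{vorraeume}, following the classical scheme of Lions adapted to the pseudo-monotone setting. First I would set up the finite-dimensional problems: on each $V_n$ (a finite-dimensional subspace of $Z$, hence of $V$), seek $u_n \in W^{1,p,p'}(0,T;V_n,V_n) \cong H^1(0,T;V_n)$ solving
\begin{align*}
\Bigl(\tfrac{du_n}{dt}(t),v\Bigr)_H + \langle A(t)u_n(t),v\rangle_V = \langle f(t),v\rangle_V \qquad \text{for all } v\in V_n,
\end{align*}
together with the initial condition $u_n(0) = P_n u_0$. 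Since each $A(t)$ is pseudo-monotone on $V$, its restriction to the finite-dimensional $V_n$ is demicontinuous (in fact continuous, as weak and strong topologies coincide there), so the existence of $u_n$ on a maximal interval follows from a Carathéodory-type ODE existence theorem; the coercivity (A3) will provide the a priori bound preventing blow-up, giving $u_n$ on all of $[0,T]$.

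Second, I would derive the a priori estimates. Testing the Galerkin equation with $u_n(t)$, using the integration-by-parts/chain rule from Proposition~\ref{Einbettung2} to rewrite $\bigl(\tfrac{du_n}{dt},u_n\bigr)_H = \tfrac{1}{2}\tfrac{d}{dt}\|u_n\|_H^2$, invoking coercivity (A3) and Young's inequality on the right-hand side, and applying Gronwall, yields uniform bounds for $u_n$ in $L^\infty(0,T;H) \cap L^p(0,T;V)$. The growth condition (A4), combined with these bounds (the term $\|x\|_H^q\|x\|_V^{p-1}$ is controlled because $\|u_n(t)\|_H$ is uniformly bounded), then gives a uniform bound for $t\mapsto A(t)u_n(t)$ in $L^{p'}(0,T;V^\ast)$. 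Passing to a subsequence, $u_n \rightharpoonup u$ in $L^p(0,T;V)$, $u_n \overset{\ast}{\rightharpoonup} u$ in $L^\infty(0,T;H)$, and $A(\cdot)u_n(\cdot) \rightharpoonup \chi$ in $L^{p'}(0,T;V^\ast)$ for some $\chi$.

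Third, I would identify the limit. Fixing $m$ and testing against $v\in V_m$ with a test function $\varphi\in C^\infty_0((0,T))$, one passes to the limit in the Galerkin equation to see that $u$ has generalized derivative $\tfrac{du}{dt} = f - \chi \in L^{p'}(0,T;V^\ast)$ in the sense of Proposition~\ref{ExistenzZeitab}, so $u\in W^1_p(0,T;V,H)$, and the density of $\bigcup_n V_n$ in $V$ extends this to all test directions. The initial condition $u(0) = u_0$ follows by the standard argument multiplying by $\varphi$ with $\varphi(0)=1$, $\varphi(T)=0$, integrating by parts via Proposition~\ref{Einbettung2}, and using $P_n u_0 \to u_0$ in $H$. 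It remains to show $\chi = A(\cdot)u(\cdot)$; this is the heart of the matter. The main obstacle is precisely this identification: unlike the elliptic case, one cannot directly test with $u_n - u$ because $u$ is not an admissible Galerkin test function and $\tfrac{du_n}{dt}$ only lives in $L^{p'}(0,T;V_n^\ast)$. The standard route is to establish $\limsup_n \int_0^T \langle A(t)u_n(t), u_n(t)-u(t)\rangle_V \le 0$ by exploiting the energy identity $\tfrac12\|u_n(T)\|_H^2 - \tfrac12\|P_n u_0\|_H^2 = \int_0^T \langle f - A(\cdot)u_n, u_n\rangle$ together with weak lower semicontinuity of $\|u_n(T)\|_H^2$; then one wants to invoke a \emph{time-slice} pseudo-monotonicity argument — the generalization of Hirano's lemma announced in the introduction — to conclude that along a further subsequence $u_n(t)\rightharpoonup u(t)$ in $V$ for a.e.\ $t$ with $\limsup_n \langle A(t)u_n(t), u_n(t)-u(t)\rangle_V \le 0$, whence (A1) gives $\langle A(t)u(t), u(t)-w\rangle_V \le \liminf_n \langle A(t)u_n(t), u_n(t)-w\rangle_V$ for a.e.\ $t$; integrating and combining with the weak convergence $A(\cdot)u_n \rightharpoonup \chi$ yields $\chi = A(\cdot)u(\cdot)$. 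The technical assumption on $Z$ and the uniform bound $\|P_{n\,|Z}\|_{\mathcal L(Z,Z)}\le c$ enter exactly here, to control the projection errors when passing from the finite-dimensional equation to the time-slice statement; making this rigorous — proving the Hirano-type lemma and verifying its hypotheses from (A1)–(A4) and Assumption~\ref{vorraeume} — is the genuinely delicate part and is what Sections~4 and~5 are devoted to.
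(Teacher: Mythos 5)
Your proposal follows essentially the same route as the paper: Galerkin approximation in the spaces $V_n$ with Carath\'eodory ODE existence, a priori bounds from (A3)--(A4) in $L^\infty(0,T;H)\cap L^p(0,T;V)$ and for $\mathcal{A}u_n$ in $L^{p'}(0,T;V^\ast)$, the projection bound on $P_n$ used to control $\frac{du_n}{dt}$ in $L^{p'}(0,T;Z^\ast)$, passage to the limit equation and initial condition, the energy identity plus weak lower semicontinuity of $\norm{u_n(T)}_H$ to obtain $\limsup_n\langle\mathcal{A}u_n,u_n-u\rangle_{\mathcal V}\le 0$, and the Hirano-type lemma (the paper's Lemma \ref{Hirano}, proved separately in Section 4) to conclude $\mathcal{A}u=\zeta$. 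The outline is correct and matches the paper's proof of Theorem \ref{maintheorem}; the only cosmetic imprecision is that demicontinuity on $V_n$ needs pseudo-monotonicity together with the boundedness from (A4), which is available.
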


\section{Hirano's Lemma}
In this section we prove a generalized version of Hirano's Lemma
(cf. \cite{Hir1}, \cite{Hir2}, \cite{Shi}). First we show that the
induced operator is bounded between the correct function spaces.
\begin{lemma}\label{induzierterOp}
  Assume that $\lbrace A(t) \mid 0\leq t\leq T\rbrace$ satisfy 
  Assumption \ref{voroperatorfamilie}. Then the in\-duced operator
  $(\mathcal{A}u)(t):=A(t)u(t)$,
  $\mathcal{A}\colon L^p(0,T;V)\cap L^\infty(0,T;H) \!\rightarrow\!
  L^{p'}\!(0,T;V^\ast)$
is bounded.
\end{lemma}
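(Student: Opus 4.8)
The plan is to show that $\mathcal{A}$ maps the space $L^p(0,T;V)\cap L^\infty(0,T;H)$ into $L^{p'}(0,T;V^\ast)$ and that this map is bounded, i.e.\ sends bounded sets to bounded sets. First I would fix $u\in L^p(0,T;V)\cap L^\infty(0,T;H)$. By Assumption~\ref{voroperatorfamilie}(A2) the function $t\mapsto A(t)u(t)$ is Bochner-measurable from $[0,T]$ to $V^\ast$, so by Pettis' theorem (Theorem~\ref{pettis}) it is weakly measurable, and in particular $t\mapsto \norm{A(t)u(t)}_{V^\ast}$ is Lebesgue measurable (this can be justified via separability of $V$ and hence of $V^\ast$ in the relevant sense, or directly from Bochner measurability). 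The substantive point is then the $L^{p'}$-integrability in time.

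The key step is to apply the growth bound~(A4) pointwise in $t$: for almost every $t$,
\begin{align*}
  \norm{A(t)u(t)}_{V^\ast} \leq c_3\norm{u(t)}_V^{p-1} + c_4\norm{u(t)}_H^q\,\norm{u(t)}_V^{p-1} + C_5(t).
\end{align*}
Raising to the power $p'$ and using that $(a+b+c)^{p'}\le 3^{p'-1}(a^{p'}+b^{p'}+c^{p'})$ (when $p'\ge 1$, with the obvious modification otherwise), it suffices to control the time integrals of the three resulting terms. For the first term, $\int_0^T \norm{u(t)}_V^{(p-1)p'} = \int_0^T \norm{u(t)}_V^{p} < \infty$ since $(p-1)p' = p$ and $u\in L^p(0,T;V)$. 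For the third term, $C_5\in L^{p'}((0,T))$ by~(A4). For the middle term, I would pull out the $L^\infty$-in-$H$ bound: $\norm{u(t)}_H^{qp'} \le \norm{u}_{L^\infty(0,T;H)}^{qp'}$ for a.e.\ $t$, so
\begin{align*}
  \int_0^T \bigl(\norm{u(t)}_H^q\,\norm{u(t)}_V^{p-1}\bigr)^{p'}
  \le \norm{u}_{L^\infty(0,T;H)}^{qp'}\int_0^T \norm{u(t)}_V^{p} < \infty.
\end{align*}
This shows $\mathcal{A}u\in L^{p'}(0,T;V^\ast)$, and collecting the estimates yields
\begin{align*}
  \norm{\mathcal{A}u}_{L^{p'}(0,T;V^\ast)}
  \le c\bigl(\norm{u}_{L^p(0,T;V)}^{p-1} + \norm{u}_{L^\infty(0,T;H)}^{q}\,\norm{u}_{L^p(0,T;V)}^{p-1} + \norm{C_5}_{L^{p'}((0,T))}\bigr),
\end{align*}
with $c$ depending only on $c_3,c_4,p$. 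The right-hand side is bounded on bounded subsets of $L^p(0,T;V)\cap L^\infty(0,T;H)$, which is exactly boundedness of $\mathcal{A}$.

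I do not expect a serious obstacle here; the argument is essentially a pointwise-in-time application of~(A4) combined with H\"older bookkeeping and the identity $(p-1)p'=p$. The only point requiring a little care is the measurability of $t\mapsto\norm{A(t)u(t)}_{V^\ast}$ so that the time integral makes sense, but this is immediate from the Bochner measurability assumed in~(A2) (a Bochner-measurable function is a.e.\ the pointwise limit of simple functions, whose norms are measurable). A minor secondary point is handling the case $p'<1$ — but since $1<p<\infty$ we always have $1<p'<\infty$, so the convexity inequality for $(a+b+c)^{p'}$ applies without modification.
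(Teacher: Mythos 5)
Your proposal is correct and follows essentially the same argument as the paper: the pointwise application of (A4), the identity $(p-1)p'=p$, and pulling the $L^\infty(0,T;H)$ factor out of the middle term. The paper merely phrases the same estimate through duality, bounding $\sup_{\norm{\varphi}_{L^p(0,T;V)}\le 1}\int_0^T \norm{A(t)u(t)}_{V^\ast}\norm{\varphi(t)}_V$ with H\"older, which is a cosmetic difference from your direct computation of the $L^{p'}$ norm.
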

\begin{proof}
The Bochner-measurability holds due to (A2). With the growth condition (A4) we conclude
\begin{align*}
     \norm{\mathcal{A}u}&_{L^{p'}(0,T;V^\ast)} 
\leq \sup\limits_{\stackrel{\varphi \in L^p(0,T;V)}{\norm{\varphi}\leq 1}} \int_0^T \norm{A(t)u(t)}_{V^\ast} 	
	 \norm{\varphi(t)}_V\\
&\leq \sup\limits_{\stackrel{\varphi \in L^p(0,T;V)}{\norm{\varphi}\leq 1}} \int_0^T
	 \big( c_3\norm{u(t)}_V^{p-1}+ c_4 \norm{u(t)}_H^q \norm{u(t)}_V^{p-1}+ C_5(t)\big)  \norm{\varphi(t)}_V\\
&\leq c_3 \norm{u}_{L^p(0,T;V)\cap L^\infty(0,T;H)}^{p-1} +c_4 \norm{u}_{L^p(0,T;V)\cap L^\infty(0,T;H)}^{q+p-1}
	  +\norm{C_5}_{L^{p'}(0,T)} .
\end{align*} 
\end{proof}
From now on we denote $\mathcal{V}:= L^p(0,T;V)$,
$\mathcal{X}:= L^p(0,T;V)\cap L^\infty(0,T;H) $ and \linebreak
$\mathcal{W}:= W^{1,p,p'}(0,T;V,Z^\ast)$.
\begin{lemma}\label{Hirano}
  Let the Assumptions \ref{vorraeume} and \ref{voroperatorfamilie} be
  satisfied. Further assume that
  $v_n \subseteq \mathcal{W}\cap L^\infty(0,T;H) $ satisfies
  $v_n \rightharpoonup v_0$ in $\mathcal{W}$,
  $\norm{v_n}_{L^\infty(0,T;H)} \leq K$ and that
  \begin{align}\label{Hiranovoraussetzung}
    \limsup\limits_{n\rightarrow \infty}\, \langle
    \mathcal{A}v_n,v_n-v_0\rangle_\mathcal{V} \leq 0. 
  \end{align} 
  Then for any $z \in \mathcal{V}$ there holds
  \begin{align}\label{Hiranoresultat}
    \langle\mathcal{A}v_0,v_0 -z\rangle_\mathcal{V}
    \leq\liminf\limits_{n\rightarrow \infty}\, \langle
    \mathcal{A}v_n,v_n-z\rangle_\mathcal{V}. 
  \end{align}
  Moreover, $\mathcal{A}v_n \rightharpoonup\mathcal{A}v_0$ in $\mathcal{V}^\ast= L^{p'}(0,T;V^\ast)$.
\end{lemma}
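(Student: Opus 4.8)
The plan is to prove Lemma~\ref{Hirano} by reducing the statement about the time-integrated operator $\mathcal{A}$ to the pointwise-in-time pseudo-monotonicity of $A(t)$ supplied by (A1), combined with a compactness argument on time slices coming from the extra regularity in $\mathcal{W}$ together with the uniform bound in $L^\infty(0,T;H)$. First I would record what the hypotheses give us: since $v_n\rightharpoonup v_0$ in $\mathcal{W}=W^{1,p,p'}(0,T;V,Z^\ast)$, Proposition~\ref{Einbettung1} gives $v_n\to v_0$ (at least weakly) in $C(0,T;Z^\ast)$, and by Lemma~\ref{induzierterOp} the sequence $\mathcal{A}v_n$ is bounded in $\mathcal{V}^\ast=L^{p'}(0,T;V^\ast)$, so after passing to a subsequence $\mathcal{A}v_n\rightharpoonup\chi$ in $\mathcal{V}^\ast$ for some $\chi$. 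The goal then splits into two: identify $\chi=\mathcal{A}v_0$, and establish the $\liminf$ inequality \eqref{Hiranoresultat}; the final weak convergence statement follows from the identification of $\chi$ together with a subsequence argument (every subsequence has a further subsequence with the same weak limit $\mathcal{A}v_0$).

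The main work is the identification of the limit. I would introduce the nonnegative functions
\begin{align*}
  e_n(t):=\langle A(t)v_n(t),v_n(t)-v_0(t)\rangle_V + C_2(t) + c_1\|v_0(t)\|_V^{p} \ge 0,
\end{align*}
where nonnegativity comes from the coercivity bound (A3) applied with $x=v_n(t)$ after expanding (plus a Young-type estimate absorbing the cross term using (A4); one may need to also add a suitable multiple of $\|v_n\|_V^p$ and rearrange, but the point is that one gets pointwise-in-$t$ nonnegative integrands whose integral is controlled). Hypothesis \eqref{Hiranovoraussetzung} then says $\limsup_n\int_0^T\big(e_n(t)-C_2(t)-c_1\|v_0(t)\|_V^p\big)\le 0$, i.e.\ $\limsup_n\int_0^T e_n \le \int_0^T(C_2+c_1\|v_0\|_V^p)$. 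Since $\int_0^T e_n \ge \int_0^T(C_2+c_1\|v_0\|_V^p)$ would \emph{not} hold in general, the right way is to show $\int_0^T\liminf_n e_n \le \liminf_n \int_0^T e_n$ by Fatou, and separately that the $\limsup$ of the slice-pairings is $\le 0$ for a.e.\ $t$ along a subsequence. Concretely: because $v_n\to v_0$ in $C(0,T;Z^\ast)$ and $\{v_n(t)\}$ is bounded in $H$ (hence in $V$ for a.e.\ $t$, after noting $v_n$ bounded in $\mathcal V$ and extracting), for a.e.\ fixed $t$ we get $v_n(t)\rightharpoonup v_0(t)$ in $V$. Then one uses a standard measure-theoretic/Fatou argument (as in Hirano's lemma, cf.\ \cite{Hir1},\cite{Shi}): setting $g_n(t):=\langle A(t)v_n(t),v_n(t)-v_0(t)\rangle_V$, one shows $\int_0^T \liminf_n g_n \le \limsup_n \int_0^T g_n \le 0$ using the nonnegativity of $g_n(t)+C_2(t)+c_1\|v_0(t)\|_V^p$ and Fatou, which forces $\liminf_n g_n(t)\le 0$ for a.e.\ $t$. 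Hence, for a.e.\ $t$, there is a subsequence (depending on $t$) along which $\limsup_k\langle A(t)v_{n_k}(t),v_{n_k}(t)-v_0(t)\rangle_V\le 0$, so the pointwise pseudo-monotonicity (A1) applies.

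From (A1) at a.e.\ $t$ we obtain, for every $w\in V$,
\begin{align*}
  \langle A(t)v_0(t),v_0(t)-w\rangle_V \le \liminf_{k}\langle A(t)v_{n_k}(t),v_{n_k}(t)-w\rangle_V,
\end{align*}
and by choosing $w=v_0(t)$ one gets $\liminf_k\langle A(t)v_{n_k}(t),v_{n_k}(t)-v_0(t)\rangle_V\ge 0$, which combined with the $\le 0$ above pins $g_{n_k}(t)\to 0$; moreover density of $\bigcup V_n$ / separability lets one upgrade to the full sequence and to $A(t)v_n(t)\rightharpoonup A(t)v_0(t)$ in $V^\ast$ for a.e.\ $t$ along the chosen subsequence. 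Then the dominated/Vitali convergence, using the growth bound (A4) with the uniform $L^\infty(0,T;H)$ bound $K$ and $v_n$ bounded in $\mathcal V$ to get $\|A(\cdot)v_n(\cdot)\|_{V^\ast}$ equi-integrable in $L^{p'}(0,T)$, identifies the weak limit $\chi=\mathcal{A}v_0$, and finally \eqref{Hiranoresultat} follows by applying Fatou once more to $t\mapsto\langle A(t)v_n(t),v_n(t)-z(t)\rangle_V + (\text{nonneg.\ dominating term})$ for arbitrary $z\in\mathcal V$. The hard part is precisely the passage from the single integrated inequality \eqref{Hiranovoraussetzung} to pointwise-in-time control of the slice pairings: one has to manufacture nonnegative integrands (via (A3)–(A4) and Young's inequality) so that Fatou can be applied, and to handle the fact that the good subsequence a priori depends on $t$ — this is the classical subtlety in Hirano's lemma, resolved by a Vitali/equi-integrability argument rather than dominated convergence, and it is the only place where Assumption~\ref{vorraeume} (through the embedding $\mathcal{W}\hookrightarrow C(0,T;Z^\ast)$ and the identification of pointwise weak $V$-limits) really enters.
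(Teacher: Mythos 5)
There is a genuine gap, and it sits exactly at the point you yourself identify as ``the hard part''. Your route to pointwise-in-$t$ information is: Fatou gives $\int_0^T\liminf_n g_n\le\limsup_n\int_0^T g_n\le 0$, and this ``forces $\liminf_n g_n(t)\le 0$ for a.e.\ $t$''. That implication is false: a measurable function with nonpositive integral need not be nonpositive almost everywhere, so you have no a.e.\ slice on which the $\limsup$-condition needed for (A1) is available, and the whole subsequent chain (apply pseudo-monotonicity at a.e.\ $t$, pin $g_{n_k}(t)\to 0$, identify the limit) has no starting point. The paper's proof runs in the opposite direction: it first proves the \emph{pointwise} inequality $\liminf_k\langle A(t)v_k(t),v_k(t)-v_0(t)\rangle_V\ge 0$ a.e.\ \emph{without} using \eqref{Hiranovoraussetzung}, by looking only at the indices where the slice pairing is negative; at such indices the coercivity estimate \eqref{hir2} (built from (A3), (A4) and Young) forces $\|v_\ell(t)\|_V$ to be bounded, the weak $Z^\ast$-convergence \eqref{hir3} plus injectivity of $V\hookrightarrow Z^\ast$ identifies the weak $V$-limit as $v_0(t)$, and pseudo-monotonicity with test element $v_0(t)$ yields the sign. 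Only then is \eqref{Hiranovoraussetzung} combined with Fatou to get $\int_0^T|h_k|\to 0$ and a single subsequence $\Lambda_3$, independent of $t$, along which $h_j(t)\to 0$ a.e.; along this fixed subsequence boundedness of $v_j(t)$ in $V$ again comes from \eqref{hir2}, and a second application of pseudo-monotonicity plus Fatou gives \eqref{Hiranoresultat}. Your alternative source of pointwise weak $V$-convergence is also not justified: ``bounded in $H$, hence in $V$'' is backwards (the embedding goes $V\hookrightarrow H$), and a bound in $L^p(0,T;V)$ does not give $\sup_n\|v_n(t)\|_V<\infty$ for a.e.\ $t$, even after extracting; the slice-wise $V$-bounds in the paper are manufactured precisely from the coercivity estimate at the relevant indices, not from the norm bounds alone.

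A second gap is the identification of the weak limit of $\mathcal{A}v_n$ by ``dominated/Vitali convergence'' through equi-integrability of $\|A(\cdot)v_n(\cdot)\|_{V^\ast}$ in $L^{p'}(0,T)$. From (A4) and the $L^\infty(0,T;H)$-bound one only gets $\|A(t)v_n(t)\|_{V^\ast}^{p'}\le c\,(\|v_n(t)\|_V^{p}+C_5(t)^{p'})$, and $\|v_n(\cdot)\|_V^{p}$ is merely bounded in $L^1((0,T))$, which is not uniform integrability; so Vitali's theorem does not apply. The paper needs none of this: once \eqref{Hiranoresultat} is established for every $z\in\mathcal{V}$, the weak convergence $\mathcal{A}v_n\rightharpoonup\mathcal{A}v_0$ in $\mathcal{V}^\ast$ follows purely from \eqref{Hiranovoraussetzung} and the elementary substitution of $z$ by $2v_0-z$ in \eqref{hir11}. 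So while your overall spirit (slice-wise pseudo-monotonicity plus Fatou, with Assumption \ref{vorraeume} entering through $\mathcal{W}\hookrightarrow C(0,T;Z^\ast)$) matches the paper, the two decisive mechanisms — how pointwise control is extracted from the single integrated inequality, and how the weak limit is identified — are missing or incorrect as proposed.
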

\begin{proof}
  Fix $z \in \mathcal{V}$. First we choose a subsequence
  $(v_k)_{k\in \Lambda_1}$, $\Lambda_1 \subseteq \mathbb{N}$, such
  that
  \begin{align}\label{hir1}
    \lim\limits_{\stackrel{k\in \Lambda_1}{k\rightarrow \infty}}
    \langle \mathcal{A}v_k,v_k-z\rangle_\mathcal{V}
    =\liminf\limits_{{n\rightarrow \infty}}\,
    \langle \mathcal{A}v_n,v_n-z\rangle_\mathcal{V}. 
  \end{align}
  From (A3) and (A4) in Assumption \ref{voroperatorfamilie} as well as
  Young's inequality we conclude that for a.e. $t \in[0,T]$, all
  $x \in \mathcal{X}$ with $\norm{x}_{L^\infty(0,T;H)} \leq K$ and all
  $y \in \mathcal{V}$ there holds
  \begin{align}\label{hir2}
    \langle A(t)x(t),x(t)-y(t) \rangle_V \geq
    k_1\norm{x(t)}_V^p-k_2\norm{y(t)}_V^p-K_3(t) 
  \end{align}
  with positive constants $k_1,k_2$ (depending on $K$) and a non-negative function
  ${K_3\in L^1((0,T))}$. Especially \eqref{hir2} holds for $x=v_n$ and
  $y=v_0$.
  Since $\mathcal{W} \hookrightarrow C(0,T;Z^\ast)$ holds by
  Proposition \ref{Einbettung1} we get $v_n \rightharpoonup v_0$ in
  $C(0,T;Z^\ast)$. This implies
\begin{align}\label{hir3}
v_n(t)\rightharpoonup v_0(t) \quad \text{in} \ Z^\ast\ \text{for all }
  t \in [0,T].
\end{align} 
Next we show that 
\begin{align}\label{hir4}
\liminf\limits_{\stackrel{k\in \Lambda_1}{k\rightarrow \infty}}\, \langle A(t)v_k(t),v_k(t)-v_0(t) \rangle_V  \geq 0 
\end{align}
holds for a.e. $t \in [0,T]$. To prove this, we define 
\begin{align*}
  B_1&:=\big\lbrace t \in [0,T] \mid 
       \langle A(t)v_k(t),v_k(t)\!-\!v_0(t) \rangle_V \!\geq k_1
       \norm{v_k(t)}_V^p\! -\!k_2 \norm{v_0(t)}_V^P \!-\!K_3(t)
\\
     &\quad\qquad\qquad\qquad\textnormal{holds
  for any } k \in \Lambda_1\big\rbrace,
\\ 
  B_2&:=\big\lbrace t \in [0,T] \mid \norm{v_0(t)}_V,K_3(t)<\infty
       \big\rbrace,
  \\
  B_3&:=\big\lbrace t \in [0,T] \mid A(t) \text{ is pseudo-monotone}\,
       \big\rbrace,
  \\
  B&:= B_1 \cap B_2\cap B_3.
\end{align*} 
Due to $v_0 \in \mathcal{V}$, $K_3 \in L^1((0,T))$, \eqref{hir2} and
(A1) we conclude that $B^\mathsf{c}$ is a set of measure zero. So it is
enough to prove \eqref{hir4} for all $t \in B$. For a fixed $t \in B$
we define $\Lambda_2\subseteq \Lambda_1$ by
\begin{align}\label{hir5}
  \ell \in \Lambda_2 \Leftrightarrow  \langle A(t)v_\ell(t),v_\ell(t)-v_0(t) \rangle_V <0.
\end{align} 
If $\Lambda_2$ is finite, then \eqref{hir4} holds for this fixed
$t \in B$. Thus, assume that $\Lambda_2$ is not finite. Then by the
definition of $B_1$ and \eqref{hir5} we get for all
$\ell \in \Lambda_2$
\begin{align*}
  K_1 \norm{v_\ell(t)}_V^p &\leq \langle
                             A(t)v_\ell(t),v_\ell(t)-v_0(t) \rangle_V
                             +k_2\norm{v_0(t)}_V+K_3(t) 
  \\
                           &\leq k_2\norm{v_0(t)}_V+K_3(t)
\end{align*}
and the right-hand side is finite due to $t \in B_2$. Therefore
$(v_\ell(t))_{\ell\in \Lambda_2}$ is bounded in $V$ and there exists
subsequence $(v_{{\ell}_{j}}(t))_{j\in \mathbb {N}}\subset (v_\ell(t))_{\ell\in
\Lambda_2}$ which converges weakly to $a\in V$.
Since $V\hookrightarrow Z^\ast$ we get that $(v_{{\ell}_{j}}(t))_{j\in
  \mathbb {N}} $ also converges weakly to $a\in Z^\ast$. 
This together with \eqref{hir3} implies $v_0(t)=a$ in
  $Z^\ast$. Since the embedding $V\hookrightarrow Z^\ast$ is
  injective, we obtain $v_0(t)=a$ in $V.$ This argument is valid for
every weakly convergent subsequence of
$ (v_\ell(t))_{\ell\in \Lambda_2}$ and therefore
\begin{align}\label{hir6}
  v_\ell(t) \rightharpoonup  v_0(t) \quad \text{in} \ V \quad
  (\ell\rightarrow\infty, \ell \in \Lambda_2).
\end{align}
From \eqref{hir5} and \eqref{hir6} together with the
pseudo-monotonicity of $A(t)$ we conclude that
\[
   0\leq \liminf\limits_{\stackrel{\ell\in \Lambda_2}{\ell\rightarrow
    \infty}}\,   \langle A(t)v_\ell(t),v_\ell(t)-v_0(t) \rangle_V.
\]
This proves \eqref{hir4} for every $t \in B$, since for every $k \in
\Lambda_1\setminus \Lambda_2$ we also have 
$\langle A(t)v_k(t),v_k(t)-v_0(t) \rangle_V\geq 0$. 

Using {\eqref{hir4}} and Fatou's Lemma we get
\begin{align*}
  0&\leq \int_0^T \liminf\limits_{\stackrel{k\in
     \Lambda_1}{k\rightarrow \infty}}\, \langle A(t)v_k(t),v_k(t)-v_0(t)
     \rangle_V \leq  \liminf\limits_{\stackrel{k\in
     \Lambda_1}{k\rightarrow \infty}}\int_0^T \langle A(t) v_k(t) ,
     v_k(t) -v_0(t) \rangle_V
  \\ 
   &\leq \limsup\limits_{\stackrel{k\in \Lambda_1}{k\rightarrow
     \infty}} \, \langle \mathcal{A} v_k , v_k -v_0 \rangle_\mathcal{V}
     \stackrel{\eqref{Hiranovoraussetzung}}{\leq} 0 ,
\end{align*}
which implies 
\begin{align}\label{hir7}
  \lim\limits_{\stackrel{k\in \Lambda_1}{k\rightarrow \infty}} \langle
  \mathcal{A} v_k , v_k -v_0 \rangle_\mathcal{V} =0. 
\end{align}
We set $h_k(t):= \langle A(t)v_k(t),v_k(t)-v_0(t) \rangle_V$ for
$k\in \Lambda_1$. Thus \eqref{hir4} and \eqref{hir7}
read:
\begin{itemize}
\item[a)] $\liminf\limits_{\stackrel{k\in \Lambda_1}{k\rightarrow \infty}}\, h_k(t)  \geq 0$ for a.e. $t\in I$,
\item[b)] $\lim\limits_{\stackrel{k\in \Lambda_1}{k\rightarrow \infty}} \int_0^T h_k =0$.
\end{itemize}
By \eqref{hir2} and Lebesgue's dominated convergence theorem, we get
${ \lim\limits_{\stackrel{k\in \Lambda_1}{k\rightarrow \infty}}
\int_0^T h_k^-=0}$, with $h_k^-(t):= - \min \lbrace h_k(t),0 \rbrace$. Since
$|h_k|= h_k+2h_k^-$, this implies ${ \lim\limits_{\stackrel{k\in \Lambda_1}{k\rightarrow \infty}}
  \int_0^T \abs{h_k}=0 }$. 
Thus we can choose a subsequence $\Lambda_3 \subseteq \Lambda_1$ such
that
\begin{align}\label{hir8}
  \lim\limits_{\stackrel{j\in \Lambda_3}{j\rightarrow \infty}}\langle A(t)v_j(t),v_j(t)-v_0(t) \rangle_V
  = 0 
\end{align}
holds for a.e.~$t\in [0,T]$. From \eqref{hir2} and \eqref{hir8}
follows that for a.e.~$t \in [0,T]$ the sequence
$(v_j(t))_{j\in\Lambda_3}$ is bounded in $V$ and with the same
argumentation as for \eqref{hir6} we get
\begin{align}\label{hir9a}
  v_j(t) \rightharpoonup v_0(t) \quad
  \text{in} \ V,\quad ( j\rightarrow \infty, \in\Lambda_3 )
\end{align}
for a.e. $t\in [0,T]$. Now \eqref{hir8} and \eqref{hir9a} together
with the pseudo-monotonicity of $A(t)$ imply, that for our fixed
$z \in \mathcal{V}$, there holds
\begin{align}\label{hir10}
  \langle A(t)v_0(t),v_0(t)-z(t) \rangle_V \leq
  \liminf\limits_{\stackrel{j\in \Lambda_3}{j\rightarrow \infty}}\,
  \langle A(t)v_j(t),v_j(t)-z(t) \rangle_V . 
\end{align}
Thus from \eqref{hir1}, \eqref{hir10} and Fatou's lemma we get
\begin{align*}
  \langle \mathcal{A}v_0  ,v_0-z\rangle_\mathcal{V} 
  &\leq \int_0^T \liminf\limits_{\stackrel{j\in
    \Lambda_3}{j\rightarrow \infty}}\, \langle A(t)v_j(t),v_j(t)-z(t)
    \rangle_V
  \\ 
  &\leq \liminf\limits_{\stackrel{j\in \Lambda_3}{j\rightarrow
    \infty}}\, \langle \mathcal{A}v_j
    ,v_j-z\rangle_\mathcal{V}=\lim\limits_{\stackrel{k\in
    \Lambda_1}{k\rightarrow \infty}} \langle \mathcal{A}v_k
    ,v_k-z\rangle_\mathcal{V}
  \\ 
  &=\liminf\limits_{n\rightarrow \infty}\,
    \langle \mathcal{A}v_n,v_n-z\rangle_\mathcal{V}. 
\end{align*}
Therefore we proved \eqref{Hiranoresultat} for any $z\in
\mathcal{V}$. This together with \eqref{Hiranovoraussetzung} implies
\begin{align}\begin{split}\label{hir11}
    \langle\mathcal{A}v_0,v_0-z\rangle_\mathcal{V} &\leq
    \liminf\limits_{n\rightarrow \infty}\, \langle\mathcal{A}v_n,v_n
    -z\rangle_\mathcal{V}
    \\
    &\leq \limsup\limits_{n\rightarrow
      \infty}\, \langle\mathcal{A}v_n,v_n-v_0
    \rangle_\mathcal{V} + \liminf\limits_{n\rightarrow \infty} \,
    \langle\mathcal{A}v_n,v_0 -z\rangle_\mathcal{V}    
    \\
    &\leq \liminf\limits_{n\rightarrow \infty}\,
    \langle\mathcal{A}v_n,v_0 -z\rangle_\mathcal{V}
\end{split}\end{align}
for any $z\in \mathcal{V}$. Using that we can replace $z$ by $2v_0-z$
in \eqref{hir11}, we obtain in a standard manner 
\begin{align*}
  \langle \mathcal{A}v_0, v_0 -z  \rangle_\mathcal{V}\leq
  \liminf\limits_{n\rightarrow \infty}\, \langle\mathcal{A}v_n,v_0
  -z\rangle_\mathcal{V} \leq \limsup\limits_{n\rightarrow \infty}\,
  \langle\mathcal{A}v_n,v_0 -z\rangle_\mathcal{V} \leq \langle
  \mathcal{A}v_0, v_0 -z\rangle_\mathcal{V}. 
\end{align*}
for any $z\in \mathcal{V}$, i.e. $\mathcal{A}v_n \rightharpoonup \mathcal{A}v_0$ in $\mathcal{V}^\ast$.
\end{proof}

\section{Proof of the main theorem}
Let $\lbrace v_n^k\rbrace_{k=1}^{d_n}$ be a basis of $V_n$,
$d_n=\operatorname{dim}V_n$. We are seeking approximative solutions
\begin{align*}
u_n(t) = \sum\limits_{k=1}^{d_n} c_n^k(t) v_n^k,
\end{align*}
which solve the Galerkin system
\begin{align}
  \begin{split}\label{galerkingl}
    \left\langle \frac{d u_n(t)}{dt}, v_n^k \right\rangle_V
    +\left\langle A(t)u_n(t), v_n^k \right\rangle_V
    &= \left\langle f(t), v_n^k  \right\rangle_V  \qquad
    k=1,\ldots,d_n
    \\
    u_n(0)&=u_{0,n},
  \end{split}
\end{align}
where $u_{0,n} \in V_n$ are chosen such that $u_{0,n}
\stackrel{n\rightarrow \infty}{\longrightarrow} u_0$ in $H$.

Since the matrix
$D^n= \big( ( v_n^i, v_n^j ) \big)_{i,j=1,\ldots,d_n}$ is positive
definite, the Galerkin system \eqref{galerkingl} can be re-written as a
system of ordinary differential equations. From (A1) and (A4) we get
that $A(t)$ is a pseudo-monotone and bounded operator, which yields
that $A(t)$ is demicontinuous from $V\rightarrow V^\ast$ \cite[Lemma
2.4]{Roub}.  This implies that the system of ordinary differential
equations fulfills the Carath\'{e}odory conditions and is therefore
solvable locally in time, i.e.~on a small time interval $[0,t_0]$.  We
multiply the Galerkin system \eqref{galerkingl} with $c_n^k(t)$, sum
from $1$ to $d_n$,  integrate over $[0,t_0]$ and
use (A3) as well as Young's inequality to obtain
\begin{align}
  \begin{split}\label{apriori1}
    \frac{1}{2} &\norm{u_n(t)}_H^2
    +(c_1-\varepsilon)\int_0^{t_0} \norm{u_n(t)}_V^p 
    \\ 
    &\leq \frac{1}{2} \norm{u_n(0)}_H^2 + c(\varepsilon) \int_0^{t_0}
    \norm{f(t)}_{V^\ast} + \int_0^{t_0} C_2(t) \leq K(u_0,f,C_2).
  \end{split}
\end{align}
This proves the boundedness of $\abs{c_n^k(t)}$ on $[0,t_0]$ for any
$1\leq k \leq d_n$ and we can iterate Carath\'{e}odory's
theorem to get the existence of an absolutely continuous solution
$u_n$ on $[0,T]$, which satisfies the a~priori estimates
\begin{align}
  \begin{split}\label{apriori2}
    \norm{u_n}_{L^\infty(0,T;H)} + \norm{u_n}_{L^p(0,T;V)} &\leq K,
    \\
    \norm{\mathcal{A}u_n}_{L^{p'}(0,T;V)^\ast} &\leq K,
  \end{split}
\end{align}
where we used Lemma \ref{induzierterOp} for the second estimate.

To get an uniform estimate for the time derivative we proceed as
follows. For any $v\in L^p(0,T;Z)$ we have $P_nv\in L^p(0,T;V_n)$,
which therefore is an admissible test-function in 
\eqref{galerkingl}. Since $P_n$ is self-adjoint and
$P_{n\ |V_n}=\operatorname{Id}$, we  conclude
\begin{align*}
  \begin{split}
    \left\langle \frac{d u_n}{dt},v \right\rangle_{L^p(0,T;Z)}
    &=\left\langle P_n\frac{d u_n}{dt},v \right\rangle_{L^p(0,T;Z)}
    =\left\langle \frac{d u_n}{dt},P_nv \right\rangle_{L^p(0,T;Z)} 
    \\ 
    &= \left\langle f,P_nv \right\rangle_{L^p(0,T;Z)} -\left\langle
      \mathcal{A}u_n,P_nv \right\rangle_{L^p(0,T;Z)}  
    \\
    &\leq c \left( \norm{\mathcal{A}u_n}_{L^{p'}(0,T;V)^\ast} +
      \norm{f}_{L^{p'}(0,T;V)^\ast}  \right)
    \norm{P_nv}_{L^p(0,T;V)}
    \\ 
    &\leq c \norm{v}_{L^p(0,T;Z)}.
  \end{split}
\end{align*}
This yields
\begin{align}
  \begin{split}\label{zeitableitung}
    \norm{ \frac{du_n}{dt}  }_{L^{p'}(0,T;Z^\ast)} \leq K.
  \end{split}
\end{align}
The estimates \eqref{apriori2} and \eqref{zeitableitung} yield
\begin{alignat}{2}\raisetag{2cm}\label{konv}
  \begin{aligned}
    u_n &\rightharpoonup u& \quad &\text{in} \ L^{p}(0,T;V),
    \\
    u_n &\overset{\ast}{\rightharpoonup} u &&\text{in} \
    L^{\infty}(0,T;H),
    \\
    \mathcal{A}u_n &\rightharpoonup \zeta &&\text{in} \
    L^{p'}(0,T;V^\ast),
    \\
    \frac{du_n}{dt} &\rightharpoonup \frac{du}{dt} &&\text{in} \
    L^{p'}(0,T;Z^\ast). 
  \end{aligned}
\end{alignat}
From this, the embedding $W^{1,p,p'}(0,T;V,Z^\ast) \hookrightarrow
C(0,T;Z^\ast)$ (cf.~Proposition \ref{Einbettung1}), the injectivity of
the embedding $H \hookrightarrow Z^\ast$, the convergence $u_{0,n}\to
u_0$ in $H$ and \eqref{apriori2}$_1$ we obtain
\begin{alignat}{2}\raisetag{2cm}\label{konv1}
  \begin{aligned}
    u_n(0) &\rightarrow u(0)=u_0& \quad &\text{in} \ H,
    \\
    u_n(T) &\rightharpoonup u(T)& \quad &\text{in} \ H.
  \end{aligned}
\end{alignat}
Let $v \in V_k$, $k \in {\mathbb N}$, and $\varphi \in C_0^\infty((0,T))$. If we use
integration by parts for real-valued functions, \eqref{galerkingl}
reads as 
\begin{align*}\begin{split}
    -\int_0^T (u_n(t),v)_H \varphi'(t) = \int_0^T \left\langle
      f(t)-A(t)u_n(t),v \right\rangle_V \varphi(t) 
\end{split}\end{align*} 
and \eqref{konv} allows us to pass to the limit in every term,
yielding 
\begin{align}\begin{split}\label{GWgleichung}
    -\int_0^T (u(t),v)_H \varphi'(t) = \int_0^T \left\langle
      f(t)-\zeta(t),v \right\rangle_V \varphi(t). 
\end{split}\end{align}
This and the density of $\cup_{k\in\mathbb{N}}V_k$ in $V$ yields that
\eqref{GWgleichung} is valid for every $v\in V$ and $C_0^\infty((0,T))$. 
Now Proposition \ref{ExistenzZeitab} implies
\begin{align}
  \begin{split}\label{gleichungdu}
    \frac{du}{dt}+\zeta=f \qquad \text{in} \ L^{p'}(0,T;V^\ast)
  \end{split}
\end{align}
and therefore $u \in W_p^1(0,T;V,H) \hookrightarrow C(0,T;H)$.

We want to use Lemma \ref{Hirano} to prove that
$\mathcal{A}u=\zeta$. From \eqref{konv} we know that
$(u_n) \subseteq \mathcal{W}\cap L^\infty(0,T;H) $ satisfies
$u_n \rightharpoonup u_0$ in $\mathcal{W}$,
$\norm{u_n}_{L^\infty(0,T;H)} \leq K$ so we only need to check that
\eqref{Hiranovoraussetzung} holds. To this end we test
\eqref{galerkingl} with $u_n$ and use integration by parts,
\eqref{konv}, \eqref{konv1} as well as the weak lower semicontinuity
of the norm to conclude
\begin{align*}
  \begin{split}
    \limsup\limits_{n\rightarrow \infty}\, \langle \mathcal{A}u_n,u_n
    \rangle_\mathcal{V} \leq \langle f,u\rangle_\mathcal{V}
    -\frac{1}{2}\norm{u(T)}_H^2  +\frac{1}{2}\norm{u_0}_H^2. 
  \end{split}
\end{align*}
If we test \eqref{gleichungdu} with $u$, use the integration by
parts formula from Proposition \ref{Einbettung2} and \eqref{konv1}, we obtain  
\begin{align*}
 \langle \zeta,u \rangle_\mathcal{V} = \langle f,u\rangle_\mathcal{V}
  -\frac{1}{2}\norm{u(T)}_H^2  +\frac{1}{2}\norm{u_0}_H^2 .  
\end{align*}
The last two equalities imply \eqref{Hiranovoraussetzung} and Lemma
\ref{Hirano} yields $\mathcal{A}u=\zeta$, so that
\begin{align}
  \begin{split}
    \frac{du}{dt}+\mathcal{A}u=f \qquad \text{in} \ L^{p'}(0,T;V^\ast).
  \end{split}
\end{align}

\section{Examples}
In this section we illustrate the above theory by two
applications. Let us start with some notation.  Let
$\Omega_1 \subseteq \mathbb{R}^3$ and
$\Omega_2 \subseteq \mathbb{R}^d$ be bounded domains with
Lipschitz-boundary. Let
$V_1:= W_{0,\operatorname{div}}^{1,p}(\Omega_1)^3=$ and
$H_1:= L_{\operatorname{div}}^2(\Omega_1)^3$ be the completion of
$\lbrace \varphi\in C_0^\infty(\Omega_1)^3 \mid \operatorname{div}
\varphi =0\rbrace$
in $W^{1,p}$ and $L^2$ respectively. Additionally we define
$V_2:=W_0^{1,p}(\Omega_2)$ and $H_2:=L^2(\Omega_2)$. We assume further
that a function
$g:[0,T]\times\Omega_2\times \mathbb{R}\subseteq \mathbb{R} \times
\mathbb{R}^n \times \mathbb{R} \rightarrow \mathbb{R}$
satisfies:
\begin{enumerate}
\item[(g1)] g is measurable in its first two variables and continuous in its third variable.
\item[(g2)] For some constant $c_6>0$, some non-negative function $C_7 \in L^{p'}((0,T))$ and $1\leq r \leq p\frac{d+2}{d}$ holds
  \[
    \abs{g(t,x,s)} \leq c_6(1+\abs{s}^{r-1})+C_7(t).
  \] 
\item[(g3)] For some non-negative $C_8\in L^1(0,T)$ there holds
\[g(t,x,s)\cdot s \geq - C_8(t).\]  
\end{enumerate}
Then we have the following two existence theorems.
\begin{theorem}\label{plaplace}
  For $0<T<\infty$ we set $\Omega_1^T:=[0,T] \times \Omega_1$.
  If $\frac{11}{5}\leq p $, then for any
  $f \in L^{p'}(0,T;V_1^\ast)$ and $u_0 \in H_1$ there exists a
  function $u \in W_p^1(0,T;V_1,H_1)$ with $u(0)=u_0$ in $H_1$, which
  satisfies the equation
  \begin{align*}
    \int_0^T \!\!\left\langle \frac{du(t)}{dt}
    ,\varphi(t)\right\rangle_{V_1} \!+\! \int_{\Omega_1^T} \abs{\nabla
    u}^{p-2} \nabla u :\nabla \varphi - u\otimes
    u :\nabla \varphi = \int_0^T \!\!\left\langle f(t)
    ,\varphi(t)\right\rangle_{V_1}
  \end{align*}
  for any $\varphi \in L^p(0,T;V_1)$.
\end{theorem}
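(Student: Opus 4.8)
The strategy is to check that Theorem~\ref{plaplace} is a special case of the Main Theorem~\ref{maintheorem}. I would work with the Gelfand triple $(V,H,V^\ast):=(V_1,H_1,V_1^\ast)$: since $p\ge\tfrac{11}{5}>\tfrac65$, Sobolev embedding gives $W^{1,p}(\Omega_1)\hookrightarrow L^2(\Omega_1)$, so $V_1\hookrightarrow H_1$ is dense and the triple is admissible. The operator is time-independent, $A(t)\equiv A:=S+\mathcal B$, where $\langle Su,\varphi\rangle_{V_1}:=\int_{\Omega_1}\abs{\nabla u}^{p-2}\nabla u:\nabla\varphi$ is the $p$-Laplacian and $\langle\mathcal Bu,\varphi\rangle_{V_1}:=-\int_{\Omega_1}(u\otimes u):\nabla\varphi$ encodes the convective term. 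Once Assumptions~\ref{vorraeume} and \ref{voroperatorfamilie} are verified, the Main Theorem produces $u\in W_p^1(0,T;V_1,H_1)$ with $\tfrac{du}{dt}+Au=f$ in $V_1^\ast$ for a.e.\ $t$ and $u(0)=u_0$ in $H_1$; testing this identity with $\varphi(t)\in V_1$ and integrating over $(0,T)$ reproduces exactly the stated weak formulation. Note that $p\ge\tfrac95$ already makes $\mathcal B$ well defined as a map $V_1\to V_1^\ast$ via $W^{1,p}(\Omega_1)\hookrightarrow L^{2p'}(\Omega_1)$.

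For Assumption~\ref{vorraeume} I would use the spectral data of the Stokes operator $A_S$ on $H_1$: let $\{w_j\}_{j\in\mathbb N}$ be its $H_1$-orthonormal eigenfunctions, set $V_n:=\operatorname{span}\{w_1,\dots,w_n\}$, and let $P_n$ be the $H_1$-orthogonal projection onto $V_n$. Then $P_n$ is self-adjoint, $P_n(V_1)=V_n$, and $\bigcup_n V_n$ is dense in $H_1$ and in $V_1$ (because $C^\infty_{0,\operatorname{div}}(\Omega_1)^3$ is contained in $D(A_S^k)$ for every $k$ and is dense in $V_1$). As intermediate space I would take $Z:=D(A_S^k)$ with $k$ large enough that $Z\hookrightarrow V_1=W^{1,p}_{0,\operatorname{div}}(\Omega_1)^3$, which is a continuity statement relying on regularity theory for the stationary Stokes system; $Z$ is then a separable Hilbert space, dense in $V_1$, with $V_n\subseteq Z$. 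Finally $P_n$ commutes with $A_S$, hence with $A_S^k$, so $\norm{P_nz}_Z=\norm{A_S^kP_nz}_{H_1}=\norm{P_nA_S^kz}_{H_1}\le\norm{A_S^kz}_{H_1}=\norm z_Z$, which yields the required uniform bound $\norm{P_{n|Z}}_{\mathcal L(Z,Z)}\le1$.

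For Assumption~\ref{voroperatorfamilie} the checks are as follows. \emph{(A1):} $S$ is monotone and hemicontinuous (monotonicity of $\xi\mapsto\abs{\xi}^{p-2}\xi$), hence pseudo-monotone by Proposition~\ref{pseudo}(a); since $p\ge\tfrac{11}{5}>\tfrac95$ the embedding $W^{1,p}(\Omega_1)\hookrightarrow L^{2p'}(\Omega_1)$ is compact, so $u_n\rightharpoonup u$ in $V_1$ forces $u_n\otimes u_n\to u\otimes u$ in $L^{p'}$ and thus $\mathcal Bu_n\to\mathcal Bu$ in $V_1^\ast$, i.e.\ $\mathcal B$ is strongly continuous, hence pseudo-monotone by Proposition~\ref{pseudo}(b), and $A=S+\mathcal B$ is pseudo-monotone by (c). \emph{(A2):} $A$ is bounded and pseudo-monotone, hence demicontinuous, and $V_1^\ast$ is separable, so for $u\in L^p(0,T;V_1)\cap L^\infty(0,T;H_1)$ the map $t\mapsto Au(t)$ is weakly, hence Bochner, measurable by Pettis (Theorem~\ref{pettis}). \emph{(A3):} for divergence-free $x\in V_1$ one has $\langle\mathcal Bx,x\rangle_{V_1}=-\int_{\Omega_1}(x\otimes x):\nabla x=0$, so $\langle Ax,x\rangle_{V_1}=\norm{\nabla x}_p^p\ge c_1\norm x_{V_1}^p$, i.e.\ (A3) holds with $C_2\equiv0$. \emph{(A4):} $\norm{Sx}_{V_1^\ast}\le\norm{\nabla x}_p^{p-1}\le c\norm x_{V_1}^{p-1}$, while $\norm{\mathcal Bx}_{V_1^\ast}\le\norm x_{L^{2p'}}^2$; interpolating $L^{2p'}$ between $L^2$ and the Sobolev exponent $L^{3p/(3-p)}$ gives $\norm x_{L^{2p'}}\le c\norm x_{H_1}^\theta\norm x_{V_1}^{1-\theta}$ with $\theta=\tfrac{5p-9}{5p-6}$, and a short computation shows $2(1-\theta)\le p-1$ precisely when $p\ge\tfrac{11}{5}$ (with equality at $p=\tfrac{11}{5}$, where $\theta=\tfrac25$ and $q:=2\theta=\tfrac45$); hence $\norm{\mathcal Bx}_{V_1^\ast}\le c\norm x_{H_1}^{q}\norm x_{V_1}^{p-1}$, which is exactly the admissible shape in (A4).

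The main obstacle, and the genuine reason for the hypothesis $p\ge\tfrac{11}{5}$, is this verification of (A4) for the convective term: the inequality $2(1-\theta)\le p-1$ above is equivalent to saying that every $u\in L^p(0,T;V_1)\cap L^\infty(0,T;H_1)$ satisfies $u\otimes u\in L^{p'}(0,T;V_1^\ast)$, which via the parabolic interpolation embedding $L^p(0,T;W^{1,p})\cap L^\infty(0,T;L^2)\hookrightarrow L^{5p/3}((0,T)\times\Omega_1)$ fails for $p<\tfrac{11}{5}$. The second point that requires care is the construction of $Z$ and of the spectral projections $P_n$ in Assumption~\ref{vorraeume}, in particular the embedding $Z\hookrightarrow V_1$. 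Granting these two ingredients, the remainder is a routine translation of the Main Theorem's conclusion into the claimed weak formulation together with the identification $A=S+\mathcal B$.
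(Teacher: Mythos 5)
Your overall strategy coincides with the paper's: write $A_1=B_1+B_2$ ($p$-Laplacian plus convective term), verify Assumptions \ref{vorraeume} and \ref{voroperatorfamilie}, and invoke Theorem \ref{maintheorem}; your checks of (A1) and (A3) are the paper's, and your (A2) argument (demicontinuity of a bounded pseudo-monotone operator plus Pettis' theorem) is a legitimate, somewhat more abstract alternative to the paper's Fubini/parabolic-embedding computation. Two points, however, do not go through as written.

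The more serious one is your verification of Assumption \ref{vorraeume} via the Stokes operator. Your density argument rests on the inclusion $C_{0,\operatorname{div}}^\infty(\Omega_1)^3\subseteq D(A_S^k)$ for every $k$, which is false for $k\ge 2$: for smooth, compactly supported, divergence-free $\varphi$ the function $A_S\varphi$ (the Leray projection of $-\Delta\varphi$) has no reason to satisfy the homogeneous boundary condition, hence need not belong to $D(A_S)$. Since the theorem imposes no upper bound on $p$, and $D(A_S)\hookrightarrow W^{1,p}$ only for $p\le 6$ (in three dimensions), for large $p$ you are forced to take $k\ge 2$ and then the density of $\cup_n V_n$ in $V_1$ is not established. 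In addition, identifying $D(A_S)$ with $W^{2,2}\cap W^{1,2}_{0,\operatorname{div}}$ and the embedding $D(A_S^k)\hookrightarrow V_1$ use Stokes regularity, i.e.\ a boundary smoother than the Lipschitz regularity assumed in Section 6. The paper circumvents both problems in Lemma \ref{bspraeume}: it takes $Z_1$ to be the closure of the divergence-free test functions in $W^{s_1,2}$, so that $Z_1\supseteq C^\infty_{0,\operatorname{div}}$ and density in $V_1$ is immediate, and it uses the basis constructed in the appendix of \cite{MNRR}, namely eigenfunctions of the operator induced by the $Z_1$-scalar product relative to the $H_1$-scalar product; the corresponding projections are then orthogonal in both $H_1$ and $Z_1$, which gives $\norm{P_n}_{\mathcal{L}(Z_1,Z_1)}\le 1$ without any elliptic regularity. (For the range $p<3$, to which the paper's detailed computations restrict themselves, your construction can be repaired with $k=1$, granted sufficient boundary regularity; "$k$ large enough'' and "for every $k$'' cannot.)

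The second point is a homogeneity slip in (A4). From $\norm{\mathcal{B}x}_{V_1^\ast}\le c\,\norm{x}_{H_1}^{2\theta}\norm{x}_{V_1}^{2(1-\theta)}$ with $2(1-\theta)<p-1$ (strict for $p>\frac{11}{5}$) you cannot conclude $\norm{\mathcal{B}x}_{V_1^\ast}\le c\,\norm{x}_{H_1}^{q}\norm{x}_{V_1}^{p-1}$, because $\norm{x}_{V_1}^{2(1-\theta)}\le\norm{x}_{V_1}^{p-1}$ fails for $\norm{x}_{V_1}<1$. The fix is easy: either note that for $\norm{x}_{V_1}\le 1$ the whole term is bounded by a constant and can be absorbed into $C_5$, or do as the paper does and interpolate with the exponents $\frac{3-p}{2}$ and $\frac{p-1}{2}$ fixed in advance, using $2p'\le r=\frac{12p}{-5p^2+17p-6}$ for $p\in[\frac{11}{5},3)$, which yields directly $\norm{B_2x}_{(V_1)^\ast}\le c\,\norm{x}_{H_1}^{3-p}\norm{x}_{V_1}^{p-1}$; but the inequality as you state it is not correct for $p>\frac{11}{5}$.
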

\begin{theorem}\label{kompaktestoerung}
  For $0<T<\infty$ we set $\Omega_2^T:=[0,T] \times \Omega_2$.  Assume
  that the function $g$ satisfies the properties (g1), (g2) and
  (g3). If Let $\frac{2d}{d+2}<p$, then for every $u_0 \in H_2$ and
  $f \in L^{p'}(0,T;(W_0^{1,p})^\ast)$ there exists a function
  $u \in W_p^1(0,T;V_2,H_2)$ with $u(0)=u_0$, such that for any
  \mbox{$\varphi \in L^p(0,T;W_0^{1,p})$} 
  \begin{align*}
    \int_0^T \!\!\left\langle \frac{du(t)}{dt}
    ,\varphi(t)\right\rangle_{V_2}\! + \!\int_{\Omega_2^T} \abs{\nabla
    u}^{p-2} \nabla u \cdot \nabla \varphi +
    g(\boldsymbol{\cdot},\boldsymbol{\cdot},u) \cdot \varphi =
    \int_0^T \!\!\left\langle f(t) ,\varphi(t)\right\rangle_{V_2}.
  \end{align*}
\end{theorem}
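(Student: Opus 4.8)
\noindent\emph{Proof plan.}\; The plan is to obtain Theorem~\ref{kompaktestoerung} as a direct application of the Main Theorem (Theorem~\ref{maintheorem}). I would take the Gelfand-Triple $(V,H,V^\ast):=\big(W_0^{1,p}(\Omega_2),\,L^2(\Omega_2),\,W^{-1,p'}(\Omega_2)\big)$ and introduce the operator family $A(t):=A_1+A_2(t)\colon V\to V^\ast$ by
\begin{align*}
  \langle A_1u,\varphi\rangle_V &:= \int_{\Omega_2}\abs{\nabla u}^{p-2}\nabla u\cdot\nabla\varphi ,\\
  \langle A_2(t)u,\varphi\rangle_V &:= \int_{\Omega_2}g(t,x,u(x))\,\varphi(x)\,dx ,
\end{align*}
so that the equation $\frac{du}{dt}+A(t)u=f$ in $V^\ast$, tested with $\varphi\in L^p(0,T;V_2)$ and integrated over $(0,T)$, is exactly the identity in the statement, while $u(0)=u_0$ in $H$ is the claimed initial condition. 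Hence it suffices to verify Assumptions~\ref{vorraeume} and~\ref{voroperatorfamilie} for this $(V,H,V^\ast)$ and this $A(\cdot)$; the conclusion then follows verbatim from Theorem~\ref{maintheorem} for the given $u_0\in H_2$ and $f\in L^{p'}(0,T;V_2^\ast)$.

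For Assumption~\ref{vorraeume} I would first use $p>\tfrac{2d}{d+2}$ to get that $W_0^{1,p}(\Omega_2)\hookrightarrow L^2(\Omega_2)$ is dense (since $C_0^\infty(\Omega_2)$ is) and, by Rellich--Kondrachov, compact, so $(V,H,V^\ast)$ is a Gelfand-Triple. For the remaining (``technical'') part I would take the $L^2$-orthonormal eigenfunctions $\{w_j\}_{j\in\mathbb N}$ of $-\Delta$ with homogeneous Dirichlet data on $\Omega_2$, set $V_n:=\operatorname{span}\{w_1,\dots,w_n\}$, and let $P_n\colon L^2(\Omega_2)\to L^2(\Omega_2)$ be the orthogonal projection onto $V_n$; then $P_n$ is self-adjoint on $H$, $P_n(V)=V_n$, and since $P_n$ commutes with $-\Delta$ it acts as a contraction on $Z:=D\big((-\Delta)^m\big)$ for every $m\in\mathbb N$. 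Choosing $m$ large enough that $Z\hookrightarrow W_0^{1,p}(\Omega_2)$ (which contains $C_0^\infty(\Omega_2)$, hence is dense) then closes Assumption~\ref{vorraeume}.

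For Assumption~\ref{voroperatorfamilie} I would argue term by term. Property (A1): $A_1$ is monotone, by the elementary inequality $\big(\abs a^{p-2}a-\abs b^{p-2}b\big)\cdot(a-b)\ge0$, and continuous, hence hemicontinuous, so pseudo-monotone by Proposition~\ref{pseudo}(a); and $A_2(t)$ is strongly continuous, since $1\le r\le p\tfrac{d+2}{d}<p^\ast$ (with $p^\ast:=\tfrac{pd}{d-p}$ when $p<d$, and any finite exponent when $p\ge d$) allows one to fix $\sigma<p^\ast$ with $\sigma\ge(r-1)(p^\ast)'$, and then the compact embedding $W_0^{1,p}(\Omega_2)\hookrightarrow L^\sigma(\Omega_2)$ together with continuity and boundedness of the Nemytskii map $v\mapsto g(t,\cdot,v)\colon L^\sigma(\Omega_2)\to L^{\sigma/(r-1)}(\Omega_2)\hookrightarrow V^\ast$ (from (g1), (g2)) turns $u_n\rightharpoonup u$ in $V$ into $A_2(t)u_n\to A_2(t)u$ in $V^\ast$; thus $A_2(t)$ is pseudo-monotone by Proposition~\ref{pseudo}(b) and $A(t)$ by Proposition~\ref{pseudo}(c). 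Property (A2): for $u\in L^p(0,T;V)\cap L^\infty(0,T;H)$ the function $(t,x)\mapsto g(t,x,u(t,x))$ is measurable by (g1), so $t\mapsto\langle A(t)u(t),v\rangle_V$ is measurable for every $v\in V$, and Bochner measurability follows from the Pettis theorem (Theorem~\ref{pettis}). Property (A3): by Poincaré's inequality and (g3), $\langle A(t)u,u\rangle_V=\int_{\Omega_2}\abs{\nabla u}^p+\int_{\Omega_2}g(t,x,u)\,u\ge c_1\norm{u}_{W_0^{1,p}}^p-\abs{\Omega_2}\,C_8(t)$ with $\abs{\Omega_2}C_8\in L^1(0,T)$. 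Property (A4): combining $\norm{A_1u}_{V^\ast}\le\norm{u}_V^{p-1}$ with the bound on $A_2(t)$ obtained from (g2), Hölder's inequality and the Gagliardo--Nirenberg interpolation between $L^2$ and $L^{p^\ast}$ — writing $\abs u^{r-1}=\abs u^{p-1}\,\abs u^{(r-p)^+}$ — one gets $\norm{A(t)u}_{V^\ast}\le c\,\norm{u}_V^{p-1}+c\,\norm{u}_H^{(r-p)^+}\norm{u}_V^{p-1}+C_5(t)$ with $C_5\in L^{p'}(0,T)$ collecting the constant and the $C_7$-contribution, which is (A4) with $q:=(r-p)^+$.

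I expect the genuine difficulty to be the verification of Assumption~\ref{vorraeume}: constructing a space $Z$ together with $H$-self-adjoint projections $P_n$ onto finite-dimensional $V_n\subseteq Z$ that are uniformly bounded in $\mathcal L(Z,Z)$ and with $\bigcup_nV_n$ dense in $W_0^{1,p}(\Omega_2)$ is exactly the ``technical assumption traced back to Lions'' singled out in the Introduction; on a smooth domain the eigenfunction construction above works without fuss, but on a merely Lipschitz domain the required embedding $Z=D((-\Delta)^m)\hookrightarrow W_0^{1,p}(\Omega_2)$ rests on the finer mapping properties of powers of the Dirichlet Laplacian and has to be justified with some care. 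Everything else — pseudo-monotonicity, measurability, coercivity and the growth estimate — is routine, the only bookkeeping being the choice of exponents in (A4), where the condition $r\le p\tfrac{d+2}{d}$ enters precisely as the borderline case that makes the interpolation admissible.
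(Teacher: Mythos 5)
Your proposal is correct and follows essentially the same route as the paper: verify Assumptions~\ref{vorraeume} and~\ref{voroperatorfamilie} for $A(t)=B_3+B_4(t)$ (monotone $p$-Laplacian plus strongly continuous Nemytskii part, Pettis' theorem for (A2), (g3) for (A3), H\"older/interpolation with the borderline exponent $r\le p\frac{d+2}{d}$ for (A4)) and then apply Theorem~\ref{maintheorem}. The only deviation is in Assumption~\ref{vorraeume}, where the paper (Lemma~\ref{bspraeume}) takes $Z_2=W_0^{s_2,2}(\Omega_2)$ and cites the appendix of \cite{MNRR} for the uniformly bounded self-adjoint projections, thereby avoiding the Lipschitz-domain regularity issue you rightly flag for the construction via $D((-\Delta)^m)$.
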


\begin{lemma}\label{bspraeume}
  $(V_1,H_1,(V_1)^\ast)$ and $(V_2,H_2,(V_2)^\ast)$, resp., are
  Gelfand-triples, which satisfy Assumption \ref{vorraeume} with
  $Z_1:= \overline{\lbrace \varphi\in C_0^\infty(\Omega_1)^3 \mid
    \operatorname{div} \varphi =0\rbrace}^{W^{s_1,2}}$
  and \linebreak ${Z_2:= W_0^{s_2,2}(\Omega_2)}$, respectively.
\end{lemma}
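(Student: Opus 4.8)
The plan is to treat both cases by one and the same argument; write $(V,H)$ for $(V_1,H_1)$ or $(V_2,H_2)$, write $Z$ for $Z_1$ or $Z_2$, and let $\Omega$ be the corresponding bounded Lipschitz domain (in $\mathbb R^3$ in the first case, in $\mathbb R^d$ in the second). First I would settle the Gelfand-triple property. In both cases $V$ and $H$ are completions of one and the same space of smooth (and, in the first case, solenoidal) vector fields, so $V\hookrightarrow H$ is dense as soon as it is continuous; continuity is the Sobolev embedding $W^{1,p}(\Omega)\hookrightarrow L^2(\Omega)$, which holds on bounded domains whenever $p\ge\frac{2d}{d+2}$ and hence under the hypotheses of Theorems~\ref{plaplace} and~\ref{kompaktestoerung}. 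Separability and reflexivity of $V$ are inherited from $W^{1,p}$.

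Next I would fix the fractional exponent in the definition of $Z$. Choosing $s_i$ large enough that the fractional Sobolev embedding $W^{s_i,2}(\Omega)\hookrightarrow W^{1,p}(\Omega)$ holds on the Lipschitz domain $\Omega$, the $W^{1,p}$-norm is controlled by the $W^{s_i,2}$-norm on the defining test functions, so passing to closures yields a continuous embedding $Z\hookrightarrow V$; since $Z$ contains those test functions, it is moreover dense in $V$. As a closed subspace of a fractional Sobolev space modelled on $L^2$, $Z$ is a separable Hilbert space, in particular reflexive and separable, and the embedding $Z\hookrightarrow H$ is dense and compact by the Rellich theorem for fractional Sobolev spaces.

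It remains to construct the subspaces $V_n$ and the projections $P_n$. Equip $Z$ with its Hilbert inner product $(\cdot,\cdot)_Z$ and let $B$ be the self-adjoint operator on $H$ associated, via the representation theorem for closed, positive, densely defined forms, with $(u,v)\mapsto (u,v)_Z$ of form domain $Z$. Since $Z\hookrightarrow H$ is compact, $B$ has compact resolvent, so there is an $H$-orthonormal basis $\{w_k\}_{k\in\mathbb N}\subseteq Z$ of eigenfunctions, $Bw_k=\lambda_k w_k$ with $0<\lambda_1\le\lambda_2\le\cdots\to\infty$, whose linear span is dense in $Z$. From $(w_j,w_k)_Z=(Bw_j,w_k)_H=\lambda_j\delta_{jk}$ the family $\{w_k\}$ is orthogonal also in $Z$; and $w_k\in Z$, so in the first case $w_k$ is divergence-free and lies in $V_1$. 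Set $V_n:=\operatorname{span}\{w_1,\dots,w_n\}\subseteq Z$: this is an increasing sequence of finite-dimensional subspaces whose union is dense in $Z$, hence in $V$. Let $P_n\colon H\to H$ be the $H$-orthogonal projection onto $V_n$; it is self-adjoint, and $P_n v=\sum_{k=1}^n (v,w_k)_H w_k\in V_n$ with $P_n(V)=V_n$ since $w_k\in V$. Finally, using $(v,w_k)_Z=\lambda_k (v,w_k)_H$ (first for $v\in D(B)$, then by density for $v\in Z$), the same partial sum is also the $Z$-orthogonal projection of $v$ onto $V_n$, whence $\norm{P_{n|Z}}_{\mathcal L(Z,Z)}=1$ for every $n$. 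This verifies Assumption~\ref{vorraeume}.

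I expect the crux to be this last point: one needs a single basis that is simultaneously orthonormal in $H$ and orthogonal in $Z$, which is precisely what keeps the $L^2$-orthogonal projections uniformly bounded on $Z$; this is why the basis must come from the spectral decomposition associated with the inner product of $Z$, rather than, say, from eigenfunctions of the Dirichlet Laplacian, whose span need not be dense in $W^{1,p}$ when $p>2$. A secondary, case-specific difficulty is to carry out the form-operator construction on the solenoidal space $H_1=L^2_{\operatorname{div}}(\Omega_1)^3$ and to ensure the resulting eigenfunctions stay divergence-free and lie in $V_1$; this is handled by defining $B$ through the $W^{s_1,2}$-inner product restricted to $Z_1$ and invoking $Z_1\hookrightarrow V_1$.
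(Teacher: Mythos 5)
Your proposal is correct and follows essentially the same route as the paper: the paper checks the Gelfand-triple and dense-embedding properties via Sobolev embeddings (with $s_1,s_2$ chosen appropriately) and then cites the appendix of \cite{MNRR} for the projections $P_n$, and the construction given there is precisely the one you carry out, namely the spectral basis of the self-adjoint operator induced by the $Z$-inner product, which is orthonormal in $H$ and orthogonal in $Z$, so that the $H$-orthogonal projections are simultaneously $Z$-orthogonal and hence uniformly bounded on $Z$. In short, you have written out in detail the argument the paper delegates to the reference.
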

\begin{proof}
  From Sobolev's embedding theorem we conclude that
  $(V_1,H_1,(V_1)^\ast)$ and $(V_2,H_2,(V_2)^\ast)$ are
  Gelfand-triples. It is also clear that $Z_1$ and $Z_2$, resp., are
  densely embedded into $V_1$ and $V_2$, resp., if $s_1$ and
  $s_2$ are chosen appropriately. A proof for the existence of 
  projections $P_n$ can be found e.g.~in the appendix of \cite{MNRR}.
\end{proof}
Next we want to introduce some operators for the proofs of our
theorems. We define $B_1, B_2, A_1:V_1 \rightarrow (V_1)^\ast$ and
$B_3, B_4(t), A_2(t):V_2 \rightarrow (V_2)^\ast$ by
\begin{align*}
  \langle B_1u ,v \rangle_{V_1}
  &:= \int_{\Omega} \abs{\nabla u}^{p-2}
                                 \nabla u :\nabla v
  \\
  \langle B_2u ,v \rangle_{V_1}&:= -\int_{\Omega} u\otimes u: \nabla v
  \\
  \langle B_3u ,v \rangle_{V_2}&:= \int_{\Omega} \abs{\nabla u}^{p-2}
                                 \nabla u \cdot\nabla v
  \\
  \langle B_4(t)u ,v \rangle_{V_2}&:= \int_{\Omega}
                                    g(t,\boldsymbol{\cdot},u)\cdot v
\end{align*}
The operators $B_1$ and $B_3$ are the well-known $p$-Laplace
operators. The basic properties of these operators can be found e.g.~\cite[Theorem
17.11]{Ben}.
\begin{theorem}\label{resultatplaplace}
  Let $\Omega\subseteq \mathbb{R}^d$ be a bounded domain,
  $p \in (1,\infty)$. Then the $p$-Laplace operator maps
  $W_0^{1,p}(\Omega)$ into $W_0^{1,p}(\Omega)^\ast$ and is monotone
  and continuous.
\end{theorem}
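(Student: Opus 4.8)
The plan is to verify in turn the three assertions --- that the $p$-Laplace operator $S$, given by $\langle Su,v\rangle := \int_\Omega \abs{\nabla u}^{p-2}\nabla u\cdot\nabla v$, maps $W_0^{1,p}(\Omega)$ into its dual, is monotone, and is continuous --- each of which reduces to an elementary inequality for the vector field $\xi\mapsto\abs{\xi}^{p-2}\xi$ on $\mathbb{R}^d$ together with H\"older's inequality. For the mapping property, note that for $u\in W_0^{1,p}(\Omega)$ the field $\abs{\nabla u}^{p-2}\nabla u$ has modulus $\abs{\nabla u}^{p-1}$, and since $(p-1)p'=p$ it lies in $L^{p'}(\Omega)^d$ with $L^{p'}$-norm equal to $\norm{\nabla u}_{L^p}^{p-1}$. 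Hence H\"older's inequality gives $\abs{\langle Su,v\rangle}\le\norm{\nabla u}_{L^p}^{p-1}\norm{\nabla v}_{L^p}\le\norm{u}_{W^{1,p}}^{p-1}\norm{v}_{W^{1,p}}$, so that $Su\in W_0^{1,p}(\Omega)^\ast$ (linearity in $v$ being obvious) and $S$ is bounded.

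For monotonicity I would establish the pointwise inequality $(\abs{\xi}^{p-2}\xi-\abs{\eta}^{p-2}\eta)\cdot(\xi-\eta)\ge 0$ for all $\xi,\eta\in\mathbb{R}^d$ and then integrate it with $\xi=\nabla u(x)$ and $\eta=\nabla v(x)$ over $\Omega$ to obtain $\langle Su-Sv,u-v\rangle\ge 0$. The pointwise inequality follows by expanding the left-hand side as $\abs{\xi}^p+\abs{\eta}^p-\abs{\xi}^{p-2}\xi\cdot\eta-\abs{\eta}^{p-2}\eta\cdot\xi$, bounding the two mixed terms via Cauchy--Schwarz, and noting that the resulting lower bound $(\abs{\xi}^{p-1}-\abs{\eta}^{p-1})(\abs{\xi}-\abs{\eta})$ is non-negative since $t\mapsto t^{p-1}$ is non-decreasing on $[0,\infty)$.

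For continuity, let $u_n\to u$ in $W_0^{1,p}(\Omega)$, hence $\nabla u_n\to\nabla u$ in $L^p(\Omega)^d$. I would reduce to a subsequence argument: it suffices to show that every subsequence has a further subsequence along which $Su_n\to Su$ in $W_0^{1,p}(\Omega)^\ast$. Passing to such a subsequence, one may assume $\nabla u_n\to\nabla u$ almost everywhere and $\abs{\nabla u_n}\le h$ almost everywhere for some fixed $h\in L^p(\Omega)$. Then continuity of $\xi\mapsto\abs{\xi}^{p-2}\xi$ gives $\abs{\nabla u_n}^{p-2}\nabla u_n\to\abs{\nabla u}^{p-2}\nabla u$ almost everywhere, dominated in modulus by $h^{p-1}\in L^{p'}(\Omega)$, so Lebesgue's dominated convergence theorem yields convergence in $L^{p'}(\Omega)^d$; by the H\"older estimate of the first step, $\norm{Su_n-Su}_{(W_0^{1,p})^\ast}\le\norm{\,\abs{\nabla u_n}^{p-2}\nabla u_n-\abs{\nabla u}^{p-2}\nabla u}_{L^{p'}}\to 0$, which closes the argument.

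I expect the continuity part to be the only mildly delicate point: it is really the continuity of a Nemytskii-type superposition operator $L^p(\Omega)^d\to L^{p'}(\Omega)^d$, and thus genuinely requires the extraction-plus-domination device rather than a direct norm estimate, whereas the mapping property and monotonicity are immediate consequences of the two displayed pointwise inequalities.
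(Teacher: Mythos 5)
Your proof is correct in all three parts: the $(p-1)p'=p$ computation and H\"older give the mapping property, the pointwise inequality $(\abs{\xi}^{p-2}\xi-\abs{\eta}^{p-2}\eta)\cdot(\xi-\eta)\ge(\abs{\xi}^{p-1}-\abs{\eta}^{p-1})(\abs{\xi}-\abs{\eta})\ge 0$ yields monotonicity after integration, and the extraction of an a.e.\ convergent, dominated subsequence is exactly the right device to get continuity of the superposition $\nabla u\mapsto\abs{\nabla u}^{p-2}\nabla u$ from $L^p(\Omega)^d$ to $L^{p'}(\Omega)^d$ for every $p\in(1,\infty)$; the final subsequence principle (every subsequence has a further subsequence with the same limit $Su$) legitimately upgrades this to convergence of the full sequence. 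Note, however, that the paper does not prove this statement at all: it is quoted from the literature (Schweizer, Theorem 17.11), so there is no internal argument to compare with --- your write-up is the standard self-contained proof of that textbook result, essentially the continuity argument for Nemytskii operators specialized to the field $\xi\mapsto\abs{\xi}^{p-2}\xi$. Two small remarks: for $1<p<2$ one should say explicitly that the field is defined to be $0$ at $\xi=0$, where it is still continuous since its modulus is $\abs{\xi}^{p-1}$; and for $p\ge 2$ one could avoid the subsequence device altogether via the pointwise estimate $\abs{\,\abs{\xi}^{p-2}\xi-\abs{\eta}^{p-2}\eta\,}\le c\,(\abs{\xi}+\abs{\eta})^{p-2}\abs{\xi-\eta}$ and H\"older, though your route has the advantage of treating all exponents uniformly.
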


Note that one can replace the $p$-Laplace operators by operators $B$
having a so-called $(p,\delta)$-structure, with $\delta \in [0,
\infty)$, $p\in (1,\infty)$, as e.g.
\begin{align*}
  \langle Bu ,v \rangle_{W_0^{1,p}}
  &:= \int_{\Omega} \big (\delta+\abs{\nabla u}\big )^{p-2}
    \nabla u :\nabla v.
 \end{align*}

\begin{proof}[Proof (of Theorem \ref{plaplace})]
  From Lemma \ref{bspraeume} we know that Assumption \ref{vorraeume}
  is satisfied. So we only have to check that $ A_1:= B_1 +B_2$
  satisfies Assumption \ref{voroperatorfamilie}. We restrict ourselves
  to the more complicated case $p <3$. For $p\ge 3$ one has to use
  different embedding theorems and some calculation simplify.

  (A1) Due to Proposition \ref{pseudo} and Theorem
  \ref{resultatplaplace} we only have to prove that
  $B_2:V_1 \rightarrow (V_1)^\ast$ is strongly continuous. Let
  $u_n\rightharpoonup u$ in $V_1$. By the Sobolev embedding theorem we
  get $u_n \rightarrow u$ in $L^s(\Omega_1)$ for any
  $1\leq s <\frac{3p}{3-p}$. Then H\"{o}lder's inequality yields
  \begin{align*}
    \sup\limits_{\norm{v}_{V_1}\leq 1}
    &\abs{\langle Au_n -Au, v
      \rangle_{V^p}}
    \\
    &\leq\sup\limits_{\norm{v}_{V_1}\leq 1} \big( \norm{u_n
      -u}_{L^s}\norm{u}_{L^s} \norm{\nabla v}_{L^p} +\norm{u_n
      }_{L^s}\norm{u_n-u}_{L^s} \norm{\nabla v}_{L^p} \big)
    \\ 
    &\stackrel{n\rightarrow \infty}{\longrightarrow} 0.
  \end{align*}
  
  (A2) Fix an arbitrary $u \in L^p(0,T;V_1)\cap
  L^\infty(0,T;H_1)$.
  Due to Pettis' Theorem we only have to prove that
  $t \mapsto \langle A_1u(t),v \rangle_{V_1}$ is Lebesgue-measurable
  for any $v \in V_1$. The functions
  $\abs{\nabla u(t,x)}^{p-2} \nabla u(t,x):\nabla v(x)$ and
  $u(t,x)\otimes u(t,x):\nabla v(x)$ are Lebesgue-measurable on
  $[0,T]\times \Omega_1$ and we easily estimate
  \begin{align*}
    &\int_0^T \int_{\Omega_1} \abs{\nabla u(t,x)}^{p-1} \abs{\nabla
      v(x)} \leq  \int_0^T \norm{\nabla u(t)}_{L^p}^{p-1}\norm{\nabla
      v}_{L^p}\leq c \norm{u}_{L^p(0,T;V_1)}^{p-1}\norm{v}_{V_1},
    \\ 
    &\int_0^T\int_{\Omega_1} \abs{ u(t,x)\otimes u(t,x):\nabla v(x)
      } \leq c \norm{u}^2_{L^{\frac{5}{3}p}([0,T]\times\Omega)}
      \norm{v}_{V_1}. 
  \end{align*}
  Since
  $L^p(I;V_1)\cap L^\infty(I;H_1) \hookrightarrow
  L^{\frac{5}{3}p}(I\times\Omega_1)$,
  see \cite[Proposition 3.1]{DiBen}, we get form Fubini's Theorem that
  \[
    t\mapsto \langle A_1u(t),v \rangle_{V_1} = \int_{\Omega_1}
    \abs{\nabla u(t,x)}^{p-2} \nabla(t,x) :\nabla v(x)
    -\int_{\Omega_1} u(t,x)\otimes u(t,x):\nabla v(x) 
  \]
  is Lebesgue-measurable.

  (A3) For any $u \in V_1$ there holds
  \begin{align*}
    \langle A_1u ,u \rangle_{V_1}= \int_{\Omega_1} \abs{\nabla u}^p
    -\int_{\Omega_1} u\otimes u :\nabla u  =\norm{u}_{V_1}^p, 
  \end{align*}
  since the second term vanishes due to $\operatorname{div} u=0$.

  (A4) From H\"{o}lder's inequaltiy we estimate
  \begin{align*}
    \norm{B_1u}_{(V_1)^\ast}  &\leq \norm{\nabla u}_{L^p}^{p-1}= \norm{u}_{V_1}^{p-1},\\
    \norm{B_2u}_{(V_1)^\ast} &\leq \norm{u}_{L^{2p'}}^2.
  \end{align*}
  For $r=\frac{12p}{-5p^2+17p-6}$ we use the H\"{o}lder interpolation
  \[
    \norm{u}_{L^{r}} \leq
    \norm{u}_{L^{2}}^{\frac{3-p}{2}}\norm{u}_{L^{\frac{3p}{3-p}}}^{\frac{p-1}{2}}.
  \]
  Since for any $p \in [\frac{11}{5},3)$ there holds that $2p'\leq r$
  we conclude that
  \begin{align*}
    \norm{B_2u}_{(V_1)^\ast} \leq C
    \norm{u}_{L^{2}}^{{3-p}}\norm{u}_{L^{\frac{3p}{3-p}}}^{p-1}\leq  C
    \norm{u}_{H_1}^{{3-p}}\norm{u}_{V_1}^{p-1}. 
  \end{align*}
  So all the assumptions are satisfied and Theorem \ref{maintheorem}
  proves Theorem \ref{plaplace}.
\end{proof}

\begin{proof}[Proof (of Theorem \ref{kompaktestoerung})]
  This works basically the same way as the proof of Theorem
  \ref{plaplace}. The assumption \ref{vorraeume} is fulfilled by Lemma
  \ref{bspraeume}, so that we only have to check Assumption
  \ref{voroperatorfamilie} for $A_2(t):=B_3 +B_4(t)$ to apply Theorem
  \ref{maintheorem}. Again, we restrict ourselves
  to the more complicated case $p <d$. For $p\ge d$ one has to use
  different embedding theorems and some calculation simplify.

  (A1) Again we only have to prove the pseudo-monotonicity of $B_4(t)$
  for a.e. $t \in [0,T]$. Using (g2) and H\"{o}lder's inequality we
  easily estimate
  \begin{align*}
    \abs{\langle B_4(t)u,\varphi\rangle_{V_2} }
    &\leq c_6(1+C_7(t)
      +\norm{u}_{L^{(r-1)\sigma'}}^{r-1})\norm{\varphi}_{L^\sigma}
    \\
    &\leq c_6(1+C_7(t) +\norm{u}_{V_2}^{r-1})\norm{\varphi}_{V_2},
  \end{align*}
  where $\sigma:= \frac{dp}{d-p}$ is the Sobolev exponent and
  $\frac{1}{\sigma}+\frac{1}{\sigma'}=1$. The last inequality holds if
  $(r-1)\sigma'\leq \sigma$ but this is equivalent to $r\leq
  \sigma$.
  For any $p\in (\frac{2d}{d+2},d)$ there holds
  $\sigma=\frac{dp}{d-p}> p\frac{d+2}{d}\ge r$, due to (g2).  
  Thus the Sobolev embedding theorem implies $u_n \rightarrow u$ in
  $L^r(\Omega_2)$ for $u_n \rightharpoonup u$ in $V_2$.  Now define
  $F_t:L^r(\Omega_2) \rightarrow L^{r'}(\Omega_2)$ by
  $F_t(u)(x):=g(t,x,u(x))$. Condition (g2) and the theory of
  Nemyckii-Operators, cf. \cite[Theorem 1.43]{Roub}, implies the
  continuity of $F_t$ and 
  \begin{align*}
    \sup\limits_{\substack{\varphi \in V_2\\ \norm{\varphi}_{V_2}\leq
    1}} \abs{\langle A_2(t)u_n - A_2(t)u,\varphi\rangle_{V_2}  }\leq
    \sup\limits_{\substack{\varphi \in V_2\\ \norm{\varphi}_{V_2}\leq
    1}} \norm{F_t(u_n)-F_t(u)}_{L^{r'}}\norm{\varphi}_{V_2}
    \rightarrow 0. 
  \end{align*}
  So $B_4(t)$ is strongly continuous and therefore pseudo-monotone.

  (A2) Using Pettis' Theorem we only have to check that
  \[
    t\mapsto \langle A_2(t)u(t),\varphi \rangle_{V_2} =
    \int_{\Omega_2} \!\abs{\nabla u(t,x)}^{p-2} \nabla u(t,x)\cdot
    \nabla v(x) + g(t,x,u(t,x))\cdot v(x)
  \] 
  is Lebesgue-measurable for arbitrary $v$ in $V_2$. The functions
  $g(t,x,u(t,x))\cdot v(x)$ and
  $\abs{\nabla u(t,x)}^{p-2} \nabla u(t,x)\cdot \nabla v(x)$ are
  Lebesgue- measurable on $[0,T]\times \Omega_2$. Using the growth
  condition (g2) we estimate
  \begin{align*}
    &\int_0^T \int_{\Omega_2} \abs{\nabla u(t,x)}^{p-1} \abs{\nabla
      v(x)} \leq  \int_0^T \norm{\nabla
      u(t)}_{L^p}^{p-1}\norm{\nabla v}_{L^p}\leq
      T^{\frac{1}{p}}\norm{u}_{L^p(0,T;V_2)}\norm{v}_{V_2},
    \\ 
    &\int_0^T\int_{\Omega_2} g(t,x,u(t,x))\cdot v(x)\leq
      c_6(1+\norm{C_7}_{L^{p'}} +
      \norm{u}^{r-1}_{L^p(I;W_0^{1,p})\cap L^\infty(I;L^2)}
      )\norm{v}_{W_0^{1,p}}, 
  \end{align*}
  so that Fubini's Theorem again proves the assertion.
    
  (A3) Using (g3) we have
  \begin{align*}
    \langle A_2(t)u ,u \rangle_{W_0^{1,p}}
    &= \int_{\Omega} \abs{\nabla u(x)}^p + g(t,x,u(x))\cdot u(x) 
    \\
    &\geq \norm{u}_{V_2}^p -C_8(t).
  \end{align*}

  (A4) Using H\"{o}lder's inequality we easily get
  \begin{align}\label{g6.4}
    \norm{B_3u}_{(V_2)^\ast}  \leq\norm{u}_{V_2}^{p-1}.
  \end{align}
  From (g2) we estimate for $u$ in $V_2$
  \begin{align}
    \begin{split}\label{ersteabsch}
      \norm{B_4(t)u}_{(V_2)\ast} &\leq c_6(1+C_7(t)) + c
      \norm{u}_{L^{\sigma'(r-1)}}^{r-1}
      \\
      &\leq c_6(1+C_7(t)) + c \norm{u}_{L^{\sigma'(r_0-1)}}^{r_0-1}
    \end{split}
  \end{align}
  where $r_0 := p\frac{d+2}{d}$ and $\sigma=\frac{dp}{d-p}$. Using the
  interpolation
  \begin{align*}
    \norm{u}_{L^{(r_0-1)\sigma'}}\leq \norm{u}_{L^2}^{1-\lambda} \norm{u}_{L^{\frac{dp}{d-p}}}^{\lambda}
  \end{align*}
  we see that $\lambda(r_0-1)\le p-1$. This and the embedding $V_2
  \hookrightarrow L^\sigma$ yields
  \begin{align*}
    \norm{A_2(t)u}_{(V_2)^\ast} \leq c_6(1+C_7(t)) + c \norm{u}_{L^2}^{(r_0-1)(1-\lambda)} \norm{u}_{L^{\frac{dp}{d-p}}}^{p-1}\\
    \leq c_6(1+C_7(t)) + c \norm{u}_{H_2}^{(r_0-1)(1-\lambda)} \norm{u}_{V_2}^{p-1}.
  \end{align*}
  So Assumption \ref{voroperatorfamilie} is satisfied.
\end{proof}



\section*{References}

\frenchspacing

\end{document}